\newtheorem{teorema}{Theorem}[section]
\newtheorem*{theorem*}{Structure Theorem}
\newtheorem{lemma}[teorema]{Lemma}
\newtheorem{propos}[teorema]{Proposition}
\newtheorem{corol}[teorema]{Corollary}
\theoremstyle{definition}
\newtheorem{rem}{Remark}[section]
\def\R{{\mathbb R}}
\def\N{{\mathbb N}}
\def\C{{\mathbb C}}
\newcommand{\Ci}{\mathcal{C}}
\def\Re{{\sf Re}}
\def\Im{{\sf Im}}
\def\Sing{{\rm Sing}}
\title{Domains with a continuous exhaustion in weakly complete surfaces}
\author[S.~Mongodi]{Samuele Mongodi\textsuperscript{1}}
\address{\textsuperscript{1}Politecnico di Milano, Dipartimento di Matematica, Via Bonardi, 9 -- I-20133 Milano, Italy}
\email{samuele.mongodi@polimi.it}
\author[Z.~Slodkowski]{Zbigniew Slodkowski\textsuperscript{2}}
\address[\textsuperscript{2}]{Department of Mathematics, University of Illinois at Chicago, 851 South Morgan Street, Chicago, Illinois 60607, Usa}
\email{zbigniew@uic.edu}
  \date{\today}
\subjclass[2010]{32C40, 32E05,32U10}
\begin{document}
\begin{abstract}
In previous works, G. Tomassini and the authors studied and classified complex surfaces admitting a real-analytic pluri\-subharmonic exhaustion function; let $X$ be such a surface and $D\subseteq X$ a domain admitting a \emph{continuous} plurisubharmonic exhaustion function: what can be said about the geometry of $D$? If the exhaustion of $D$ is assumed to be smooth, the second author already answered this question; however, the continuous case is more difficult and requires different methods. In the present paper, we address such question by studying the local maximum sets contained in $D$ and their interplay with the complex geometric structure of $X$; we conclude that, if $D$ is not a modification of a Stein space, then it shares the same geometric features of $X$.
\end{abstract}

\maketitle

\section{Introduction}

In \cites{crass,mst, mst2}, G. Tomassini and the authors initiated the study of the geometry of weakly complete spaces; the question had been raised by G. Tomassini and the second author in \cite{ST}, where they also defined the minimal kernel of a weakly complete space, an essential tool in the investigations presented in \cites{mst, mst2}.

In that first attempt, complex surfaces which admit a real-analytic plurisubharmonic exhaustion function were considered; in \cite{M}, the first author considered, inside such surfaces, open domains with an exhaustion which could be, \emph{a priori}, only smooth and proved that they actually admit a real-analytic exhaustion.

This paper aims to study the case of an open domain, inside a complex surface with a global real-analytic exhaustion, which admits only a continuous plurisubharmonic exhaustion function, thus generalizing the results of \cite{M} to the continuous case.

We obtain the following result.
\begin{teorema}\label{teo_main}Let $X$ be a complex surface which admits a real-analytic plurisubharmonic exhaustion function and consider a domain $D\subseteq X$ which admits a continuous plurisubharmonic exhaustion function. Then, one of the following holds true:
\begin{enumerate}
\item $D$ is a modification of a Stein space of dimension $2$
\item $X$ and $D$ are proper over open Riemann surfaces
\item $X$ and $D$ are of Grauert type.
\end{enumerate}\end{teorema}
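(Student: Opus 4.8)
The plan is to study $D$ through its \emph{minimal kernel}. Fix a continuous plurisubharmonic exhaustion $\varphi$ of $D$ and let $\Sigma_D\subseteq D$ be the minimal kernel of $D$, i.e.\ (the continuous analogue, in the spirit of \cite{M}, of the notion introduced in \cite{ST}) the set of points at which no continuous plurisubharmonic exhaustion of $D$ is strictly plurisubharmonic. The first task is to record the basic properties of $\Sigma_D$: it is closed in $D$; it is a \emph{local maximum set} of $X$ in the sense of Slodkowski --- every point of $\Sigma_D$ has a neighbourhood $U$ in $X$ with $\max_{\oli V\cap\Sigma_D}v=\max_{(\de V)\cap\Sigma_D}v$ for all plurisubharmonic $v$ on $U$ and all $V\Subset U$ --- and, being built from a merely continuous $\varphi$, it need not be real-analytic, which is exactly the source of the difficulties absent from \cite{M}. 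The proof then branches on whether $\Sigma_D$ is empty.

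If $\Sigma_D=\emptyset$, then by definition $D$ carries a continuous plurisubharmonic exhaustion which is nowhere maximal; smoothing it following Richberg's theorem and applying Grauert's solution of the Levi problem to its sublevel sets shows that $D$ is $1$-convex; a surface which is $1$-convex is a modification of a Stein space of dimension $2$, which is case (1). (When $\varphi$ can only be taken continuous, the compact analytic curves that survive the smoothing are precisely what forces the conclusion to be ``modification of Stein'' rather than ``Stein''.)

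Assume now $\Sigma_D\neq\emptyset$. By the structure theory of local maximum sets in a complex surface, near each of its points $\Sigma_D$ is either contained in a one-dimensional complex analytic set or is a Levi-flat real hypersurface; in either case it is swept out by germs of holomorphic curves, its \emph{leaves}. If all these leaves are compact and can be blown down (Grauert's criterion), contracting them again realizes $D$ as a modification of a Stein space of dimension $2$, which is case (1) and leaves $X$ unconstrained. Otherwise $\Sigma_D$ carries a non-trivial family of leaves, and here the real-analytic plurisubharmonic exhaustion $u$ of $X$ is brought in: a bumping argument shows that $\Sigma_D$ is contained in the degeneracy locus of the Levi form of $u$ (a real-analytic set), so each leaf of $\Sigma_D$ is an integral curve of the real-analytic null-foliation $\mathcal F$ of $i\partial\bar\partial u$ defined near the minimal kernel $\Sigma_X$ of $X$. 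Uniqueness of integral curves lets us continue the leaves of $\Sigma_D$ along $\mathcal F$ throughout $X$, and invoking the classification of \cites{mst,mst2} for $X$ --- legitimate because $u$ is real-analytic --- we conclude that, according to whether the generic leaf of $\mathcal F$ is compact or not (equivalently, whether its transverse holonomy is trivial or not), $X$ is proper over an open Riemann surface with compact fibres, or is of Grauert type. Finally one checks that $\varphi$ is constant along the fibres/leaves of this structure, so the fibration of $X$ restricts to one of the same kind on $D$: $D$ is proper over an open Riemann surface (an open piece of the base of $X$), respectively of Grauert type. This also shows that a non-contractible $\Sigma_D$ prevents $X$ from being a modification of a Stein space, so the alternatives for $X$ and for $D$ are matched as claimed.

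The main obstacle is this last branch: turning the purely continuous, a priori irregular datum of the local maximum set $\Sigma_D$ into a \emph{global} holomorphic fibration, compatibly with the geometry of $X$. In \cite{M} one first upgrades the exhaustion of $D$ to a real-analytic one and then quotes the classification for $D$ directly; that shortcut is unavailable here, so the crux becomes the interplay between the rigidity of the real-analytic foliation $\mathcal F$ near $\Sigma_X$ and the pseudoconcavity of $\Sigma_D$ --- controlling the transverse dynamics of the leaves of $\Sigma_D$, matching it with the global dynamics of $\mathcal F$, and arranging $\varphi$ to be basic with respect to the resulting map.
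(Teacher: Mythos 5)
Your overall strategy (analyze $\Sigma_D$ as a local maximum set and try to extract complex-analytic structure from it) points in the right general direction, but the pivotal step is unsupported and, as stated, not a theorem. You assert that ``by the structure theory of local maximum sets in a complex surface, near each of its points $\Sigma_D$ is either contained in a one-dimensional complex analytic set or is a Levi-flat real hypersurface.'' No such local structure theory exists: a compact set with the local maximum property in a surface need not carry any analytic structure at all, and this is precisely why the continuous case is harder than the one treated in \cite{M} --- one cannot classify $\Sigma_D$ intrinsically. Everything downstream of that claim (the ``leaves'' of $\Sigma_D$, their continuation along a ``null-foliation'' of $i\de\debar u$, the holonomy dichotomy) therefore rests on structure you have not established; moreover the degeneracy locus of the Levi form of a single real-analytic plurisubharmonic exhaustion need not support a foliation with uniqueness of integral curves, so the continuation argument cannot be run as described. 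Your treatment of the compact-leaf case (``blow down by Grauert's criterion'') is likewise not how the conclusion is reached, and your closing claim that a non-contractible $\Sigma_D$ ``matches'' the alternatives for $X$ and $D$ misreads the statement: case (1) places no constraint on $X$, and indeed $D$ can be a modification of a Stein space while $X$ is of Grauert type.

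The paper's route avoids ever having to understand $\Sigma_D$ in isolation. It first classifies $X$ via \cite{mst} (legitimate because $X$ has a real-analytic exhaustion). If $X$ is a modification of a Stein space, then $\Sigma_D\subseteq\Sigma_X$ is a union of compact curves, and Proposition \ref{prp_ovvia} plus Corollary \ref{cor_modstein} finish. Otherwise $X$ carries a proper real-analytic map $F$ to a subset of $\R^n$ with pluriharmonic components, whose regular fibers are compact curves or compact Levi-flat hypersurfaces with dense leaves. The organizing dichotomy is then: either some fiber $F^{-1}(t_0)$ is entirely contained in $D$, in which case $D=F^{-1}(F(D))$ and $D$ inherits the type of $X$ (a result quoted from \cite{M}); or no fiber is, in which case one slices $\Sigma_D$ by level sets of $F$ and of a minimal function $\phi$ to get compact local maximum sets $\{F=t\}\cap\{\phi=c\}\cap\Sigma_D$ sitting inside regular fibers --- and regular fibers admit no \emph{proper} compact local maximum subsets (Proposition \ref{prp_ovvia}, Corollary \ref{cor_denso}). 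Hence $\Sigma_D$ lies over the discrete set of singular values; each piece $K_j$ is a compact local maximum set that a bumping argument (Lemma \ref{lmm_complete}) traps inside finitely many compact curves, and Corollary \ref{cor_modstein} yields that $D$ is a modification of a Stein space. If you want to repair your argument, replace the nonexistent local structure theory of $\Sigma_D$ with this slicing against the fibers of $F$: that is where the real-analytic geometry of $X$ actually enters.
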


Such an exploration is also useful as a small step towards the general problem of understanding the geometry of a weakly complete surface, with no assumptions on the regularity of the exhaustion.

\medskip

Our work is organized as follows.

In Section 2 we gather some preliminary results which are essentially well known in the smooth case and whose extension to the continuous case relies, for the most part, on the approximation results of Richberg (see \cite{rich}); Section 3 studies the connections between compact sets with the local maximum property and the (complex) analytic structure of the minimal kernel.
Finally, Section 4 contains the proof of our main result, together with some easy, but interesting, consequences.

\section{Preliminary results}

Let $M$ be a complex manifold.


An upper semicontinuous function $\phi:M\to\R$ is called \emph{strongly} plurisubharmonic at $p\in M$ if it is plurisubharmonic at $p\in M$ and stays so under small $\Ci^2$ perturbations; if this happens at every point $p\in U$ for some open set $U$, we call $\phi$ a strongly plurisubharmonic function on $U$.

Suppose $M$ is endowed with a continuous plurisubharmonic exhaustion function, i.e. $M$ is weakly complete; for every such function $\phi:M\to\R$, let $Z_\phi$ be the set of points $p\in M$ at which $\phi$ is \emph{not} strongly plurisubharmonic. We define the \emph{continuous minimal kernel} of $M$ as
$$\Sigma_M=\bigcap_{\phi}Z_\phi$$
where the intersection is taken over all the continuous plurisubharmonic exhaustion functions $\phi:M\to\R$. This is the same construction described in \cite{ST} for $\Ci^k$ functions, $k\geq 2$; we do not emphasize the use of continuous functions in the symbol $\Sigma_M$, as opposed to $\Ci^2$ o $\Ci^\infty$ or real-analytic functions, because in the present paper we only deal with the continuous case and, for the same reason, we will refer to it simply as the minimal kernel.

A continuous plurisubharmonic exhaustion function $\phi:M\to\R$ is called \emph{minimal} (for $\Sigma_M$) if $Z_\phi=\Sigma_M$; it can be showed that, if $M$ is weakly complete, a continuous minimal function always exists.

\medskip

Now, we present the generalization to the continuous case of the following well-known result: if a manifold admits a smooth exhaustion function which is strongly plurisubharmonic outside of a compact set, then it is a modification of a Stein space of the same dimension.

The proof is essentially a suitable application of Richberg approximation theorem (see \cite{rich}); the result itself is surely known, but we were not able to find an appropriate reference.

\begin{lemma}\label{lmm_richberg}Let $M$ be a complex manifold and $\phi:M\to\R$ a continuous plurisubharmonic exhaustion function. Assume that $\phi(\Sigma_M)$ does not contain any half-line $(a,+\infty)$, with $a\in\R$. Then $M$ is a modification of a Stein space.
\end{lemma}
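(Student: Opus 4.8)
The plan is to use the hypothesis to cut $M$ into a sequence of relatively compact, smoothly bounded, strongly pseudoconvex domains exhausting $M$, to invoke Grauert's solution of the Levi problem on each of them, and then to glue the resulting Remmert reductions. (When one knows in addition that $\Sigma_M$ is compact, everything collapses: Richberg's theorem lets one smooth a minimal $\psi$ outside a compact neighbourhood of $\Sigma_M$ into a smooth plurisubharmonic exhaustion that is strongly plurisubharmonic off a compact set, and one cites the smooth version quoted above; the point is the general case, where $\Sigma_M$ may have infinitely many compact components going off to infinity.)

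The hypothesis is equivalent to saying that $\R\setminus\phi(\Sigma_M)$ is unbounded above, so one may fix a sequence $t_n\uparrow+\infty$ with $t_n\notin\phi(\Sigma_M)$. Since $\phi$ is an exhaustion, each level $L_n:=\phi^{-1}(t_n)$ is compact, and by construction $L_n\cap\Sigma_M=\emptyset$. Let $\psi\colon M\to\R$ be a minimal continuous plurisubharmonic exhaustion, so $Z_\psi=\Sigma_M$ and $\psi$ is strongly plurisubharmonic on $M\setminus\Sigma_M$; in particular $\psi$ is strongly plurisubharmonic on an open neighbourhood $W_n$ of each $L_n$. On $W_n$ the function $\phi+\varepsilon_n\psi$ is then strongly plurisubharmonic, and for $\varepsilon_n>0$ small enough and $c_n$ close to $t_n$ the sublevel set $\{\phi+\varepsilon_n\psi<c_n\}$ has compact closure (it sits inside a sublevel set of $\phi$, as $\psi$ is bounded below) and its boundary is contained in $W_n$. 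Applying Richberg's theorem only near that boundary, I replace $\phi+\varepsilon_n\psi$ there by a smooth strongly plurisubharmonic function having a regular value at the relevant level, and glue, obtaining a relatively compact domain $\Omega_n\Subset M$ with smooth strongly pseudoconvex boundary; the parameters can be arranged so that $\Omega_n\Subset\Omega_{n+1}$, $\bigcup_n\Omega_n=M$, and each $\Omega_n$ is Runge in $\Omega_{n+1}$ (up to the boundary modification it is a sublevel set of the plurisubharmonic exhaustion $\phi$).

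By Grauert's theorem each $\Omega_n$ is $1$-convex: it is holomorphically convex and admits a Remmert reduction $\pi_n\colon\Omega_n\to\Omega_n'$ onto a Stein space, contracting the maximal compact analytic subset $E_n\subseteq\Omega_n$ to finitely many points. The crucial bookkeeping fact is that every compact analytic subset $A$ of $M$ is contained in finitely many level sets of $\phi$ (a plurisubharmonic function is constant on a connected compact analytic set), that $A\subseteq\Sigma_M$ (no continuous plurisubharmonic exhaustion can be strongly plurisubharmonic at a point of $A$), and hence that $A$ avoids all the $L_n$ and lies in some $\Omega_m$; this makes the exceptional sets saturated and compatible, namely $E_n=E_{n+1}\cap\Omega_n$ and $\pi_{n+1}^{-1}(\Omega_n')=\Omega_n$, and it forces $\pi_{n+1}|_{\Omega_n}$ to coincide with $\pi_n$ by uniqueness of the Remmert reduction. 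Therefore the $\pi_n$ glue to a holomorphic map $\pi\colon M\to S:=\bigcup_n\Omega_n'$, which is proper (since $\pi^{-1}(\Omega_n')=\Omega_n$ is relatively compact) and is a modification (it is biholomorphic off the analytic set $\bigcup_nE_n$). Since $\phi$ is constant on the fibres of $\pi_n$, it descends to a plurisubharmonic exhaustion of the Stein space $\Omega_n'$, whence the inclusions $\Omega_n'\hookrightarrow\Omega_{n+1}'$ are Runge, and a Behnke--Stein type argument shows that $S$ is Stein. Hence $M$ is a modification of the Stein space $S$.

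The Richberg smoothing near the boundaries of the $\Omega_n$ and the compactness bookkeeping are routine; the part requiring the most care is the last step, the globalization from the exhausting $1$-convex pieces to all of $M$ — in particular checking that the Runge property descends to the Remmert reductions and that the glued map $\pi$ is proper and a modification, which is exactly where the saturation property of the compact analytic subsets of $M$ enters.
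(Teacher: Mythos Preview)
Your strategy is the paper's: pick levels $t_n\notin\phi(\Sigma_M)$, use a minimal $\psi$ to make $\phi+\varepsilon_n\psi$ strongly plurisubharmonic in a band around those levels, apply Richberg there to produce relatively compact smoothly bounded strongly pseudoconvex domains $\Omega_n\Subset\Omega_{n+1}$ exhausting $M$, and invoke Grauert on each piece. The only substantive difference is the packaging of the last step: the paper shows $\Omega_{n}$ is Runge in $\Omega_{n+1}$, concludes that $M$ itself is holomorphically convex as an increasing union of Runge holomorphically convex domains, and then takes a single global Remmert reduction; you instead glue the local Remmert reductions $\pi_n$ using the saturation fact $E_{n+1}\cap\Omega_n=E_n$ (which follows, as you say, from every positive-dimensional compact analytic subset lying in $\Sigma_M$ and hence avoiding the boundaries) and then run Behnke--Stein on the target. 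Both routes work and are essentially equivalent.

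One spot in your write-up is looser than the paper and deserves care: the Runge assertion. You justify $\Omega_n'\hookrightarrow\Omega_{n+1}'$ Runge by saying ``$\phi$ descends to a plurisubharmonic exhaustion'', but after the Richberg boundary modification $\Omega_n$ is \emph{not} a sublevel set of $\phi$, so descending $\phi$ does not exhibit $\Omega_n'$ as a sublevel set in $\Omega_{n+1}'$; and a smooth strongly pseudoconvex domain in a Stein space is not automatically Runge. The paper handles this by descending instead the glued auxiliary function (the one whose sublevel set \emph{is} $\Omega_n$ by construction), checking via Grauert--Remmert that the descended function stays plurisubharmonic across the contracted fibres, and then citing Narasimhan on sublevel sets in Stein spaces. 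Your own construction produces exactly such a function, so the fix is just to descend that one rather than $\phi$.
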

\begin{proof}Let $\psi$ be a (continuous) minimal function on $M$.

Since $\phi(\Sigma_M)$ does not contain any half-line, there is an increasing sequence $\{c_n\}_{n\in\N}$ of real numbers such that $c_n\to+\infty$  and $\phi(\Sigma_M)\not\ni c_n$ for every $n\in\N$. Set $c_{-1}=-\infty$ and
$$F_n=\Sigma_M\cap\{c_{n-1}<\phi<c_n\}\;.$$
The sets $F_n$ are compact and pairwise disjoint and their union is $\Sigma_M$; we define
$$c'_n=\min_{F_n}\phi\qquad c''_n=\max_{F_n}\phi\;.$$
We have $c'_n<c_n<c''_{n+1}$, so
$$c'_0\leq c''_0<c'_1\leq c''_1<\ldots\leq c''_{n-1}<c'_n\leq c''_{n}<c'_{n+1}\leq \ldots$$
and $\phi(F_n)\subseteq [c'_n, c''_n]$.

Consider $U_n=\{c''_{n}<\phi<c'_{n+1}\}$, for a fixed $n$, and the function $\phi_t=\phi+t\psi$, where $t>0$ is chosen such that
$$t\max_{\overline{U}_n}|\psi|<\frac{c'_{n+1}-c''_{n}}{8}=\delta\;.$$
The function $\phi_t$ is strongly plurisubharmonic on $U_n$, because $\psi$ is minimal and $\Sigma_M$ does not intersect $U_n$; moreover, if $c''_n+2\delta<c<c'_{n+1}-2\delta$, then
$$\{\phi_t=c\}\subset \{|c-\phi|=t|\psi|\}\subseteq\{|c-\phi|<\delta\}\subseteq\{c-\delta<\phi<c+\delta\}\subset U_n$$
which implies that the level set $\{\phi_t=c\}$ is compact in $U_n$.

Set $V_n=\{c''_n+2\delta<\phi_t<c'_{n+1}-2\delta\}$ and apply Richberg's approximation result \cite{rich}*{Satz 4.3} to $\phi_t\vert_{V_n}$, which is strongly plurisubharmonic, in order to obtain a $\Ci^\infty$ function $\psi_n:V_n\to\R$ such that $\psi_n$ is strictly plurisubharmonic on $V_n$ and
\begin{equation}\label{eq_stimarich}
0\leq \psi_n(x)-\phi_t(x)\leq \frac{1}{2}\min\{\phi_t(x)-c''_n-2\delta, c'_{n+1}-2\delta-\phi_t(x)\}\;.\end{equation}

By Sard's theorem, there is $d_n\in (c''_{n}+2\delta, c'_{n+1}-2\delta)$ such that the level $\{\psi_n=d_n\}$ is regular; by \eqref{eq_stimarich}, such a level is compact. Therefore, the domain
$$\Omega_n=\{x\in M\ :\ \phi_t(x)<c''_n+2\delta\}\cup\{x\in V_n\ :\ \psi_n(x)<d_n\}$$
is relatively compact in $M$ and, for $0<\eta<d_n-c''_n-2\delta$, the function
$$\rho_n(x)=\left\{\begin{array}{ll}\phi_t(x)&x\in M\setminus (\Omega_n\cup V_n)\\\max\{c''_n+2\delta+\eta, \psi_n(x)\}&x\in V_n\\c''_n+2\delta+\eta&x\in\Omega_n\setminus V_n\end{array}\right.$$
is continuous and plurisubharmonic on $M$,smooth and strictly plurisubharmonic on a neighborhood of $b\Omega_n$. By \cite{grau1}*{Theorem 1}, $\Omega_n$ is holomorphically convex and, given the existence of $\rho$, a modification of a Stein space.

We note that $\Omega_n$ is strictly contained in $\{\phi<c'_{n+1}\}$ and $\Omega_{n+1}$ contains $\{\phi<c''_{n+1}\}$, so, as $c''_{n+1}\geq c'_{n+1}$, we have that $\Omega_n\Subset\Omega_{n+1}$.

Therefore $\Omega_1\Subset\Omega_2\Subset\ldots\Subset\Omega_n\Subset\ldots$ is an increasing family of relatively compact domains which are all modifications of Stein spaces and all have smooth stricly pseudoconvex boundary.

Let us show that $\Omega_j$ is Runge in $\Omega_{j+1}$, for every $j$; to such aim, we will use some well known facts about Remmert reduction, which we collected in \cite{Fuffa}, as we were not able to find a concise reference.

Let $\pi_j:\Omega_j\to Y_j$ be the Remmert reduction of $X_j$; let $R$ be the relation on $\Omega_j$ such that $x_1Rx_2$ if and only if there exists a compact connected complex subvariety of $\Omega_j$ which contains both $x_1$ and $x_2$, then $Y_j=X_j/R$. Therefore, $\pi_j^{-1}(y)$ is a compact complex subvariety of $\Omega_j$  for every $y\in Y_j$, so, $\rho_{j-1}$ is constant on $\pi^{-1}_j(y)$ for all $y\in Y_j$. This grants us the existence of a continuous function $\psi_j:Y_j\to \R$ such that $\psi_j\circ \pi=\rho_{j-1}$.

Set
$$S_j=\{y\in Y_j\ :\ \dim\pi^{-1}_j(y)>0\}\;,$$
then $\pi_j$ is a biholomorphism from $X_j\setminus \pi_j^{-1}(S_j)$ to $Y_j\setminus S_j$, therefore $\psi_j$ is plurisubharmonic on $Y_j\setminus S_j$ and, being continuous, is bounded around every point of $S_j$. By \cite{GraRem}*{Satz 3}, $\psi_j\vert_{Y_j\setminus S_j}$ extends uniquely to a plurisubharmonic function $\tilde{\psi}_j:Y_j\to\R$; let $y\in S_j$ and let $S_y=\pi_j^{-1}(y)$. We know that $\rho_{j-1}$ and $\tilde{\psi}_j\circ\pi_j$ are both constant on $S_y$; let $j:\Delta\to M$ be a complex disc intersecting $S_y$ at a single regular point $p\in S_y$, such that $j(0)=p$. The functions $\rho_{j-1}\circ j$ and $\tilde{\psi}_j\circ\pi_j\circ j$ are both subharmonic on $\Delta$ and coincide in $\Delta\setminus\{0\}$, but then
$$\tilde{\psi}_j\circ\pi_j\circ j(0)=\limsup_{\zeta\to 0} \tilde{\psi}_j\circ\pi_j\circ j(\zeta)=\limsup_{\zeta\to 0}\rho_{j-1}\circ j(\zeta)=\rho_{j-1}\circ j(0)\;.$$
So $\rho_{j-1}(p)=\tilde{\psi}_j\circ\pi_{j}(p)$, therefore $\tilde{\psi}_j(y)=\psi_j(y)$, thus proving that $\psi_j$ is (continous and) plurisubharmonic on $Y_j$.

In conclusion, by \cite{Nar}*{Corollary 1, Section 4}, the set
$$\{y\in Y_j\ :\ \psi_j(y)<d_{j-1}\}$$
is Runge in $Y_j$. By \cite{KK}*{Lemma 63.4}, this implies that $X_{j-1}$ is Runge in $X_j$.

\medskip

Finally, as $M$ is an increasing union of holomorphically convex domains, each one Runge in the next one, we conclude that $M$ is itself holomorphically convex. Let $\pi:M\to Y$ be the Remmert reduction of $M$; as $\psi$ is strongly plurisubharmonic on $M\setminus\Sigma_M$, we conclude that $\pi:M\setminus\Sigma M\to \pi(M\setminus\Sigma_M)$ is biholomorphism, therefore $Y$ and $M$ have the same dimension and $Y$ is an irreducible (because $M$ is a manifold) Stein space, so $M$ is a modification of a Stein space.

It is now easy to show that
$$S=\{y\in Y\ :\ \dim \pi^{-1}(y)>0\}$$
is a discrete set in $Y$, which implies that $M$ is the modification of $Y$ along a collection of at most countably many points.\end{proof}

As an obvious consequence, we have the following.

\begin{corol}Let $M$ be a complex manifold, with $\phi:M\to\R$ a continuous plurisubharmonic exhaustion, which is strongly plurisubharmonic outside a compact set. Then $M$ is a modification of a Stein space.\end{corol}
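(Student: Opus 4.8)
The plan is to deduce the corollary from Lemma \ref{lmm_richberg} by verifying its hypothesis. Suppose $\phi:M\to\R$ is a continuous plurisubharmonic exhaustion that is strongly plurisubharmonic outside a compact set $K\subseteq M$. Since $\Sigma_M$ is, by definition, contained in $Z_\phi$ for the particular exhaustion $\phi$ (the minimal kernel is the intersection over \emph{all} such exhaustions, hence contained in each $Z_\phi$), and $Z_\phi$ — the set where $\phi$ fails to be strongly plurisubharmonic — is contained in $K$ by hypothesis, we get $\Sigma_M\subseteq K$. As $K$ is compact and $\phi$ is continuous, $\phi(K)$ is a compact subset of $\R$, hence bounded, say $\phi(K)\subseteq[-R,R]$ for some $R>0$.

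From this, $\phi(\Sigma_M)\subseteq\phi(K)\subseteq[-R,R]$ is bounded, so it certainly cannot contain any half-line $(a,+\infty)$ with $a\in\R$: such a half-line is unbounded above, while $\phi(\Sigma_M)$ is contained in a bounded interval. Therefore the hypothesis of Lemma \ref{lmm_richberg} is satisfied, and we conclude that $M$ is a modification of a Stein space. I would present this in just a few lines, as the only content is the observation $\Sigma_M\subseteq Z_\phi\subseteq K$ followed by the boundedness of $\phi(K)$.

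There is essentially no obstacle here; the ``hard part'' — if one wants to call it that — is simply recognizing that the containment $\Sigma_M\subseteq Z_\phi$ holds for each individual exhaustion $\phi$, which is immediate from the definition of the minimal kernel as an intersection. One could alternatively phrase the proof without mentioning $\Sigma_M$ at all, by taking a minimal function $\psi$ and noting $Z_\psi=\Sigma_M\subseteq Z_\phi\subseteq K$, but invoking the intersection definition directly is cleaner. Either way, the statement follows immediately and it is indeed, as the text says, ``an obvious consequence'' of the lemma.
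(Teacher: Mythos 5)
Your proof is correct and is exactly the argument the paper intends: the paper gives no proof, calling the corollary an obvious consequence of Lemma \ref{lmm_richberg}, and your chain $\Sigma_M\subseteq Z_\phi\subseteq K$ together with the boundedness of $\phi(K)$ is precisely the verification of the lemma's hypothesis. Nothing to add.
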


\begin{rem}
In the situation of Lemma \ref{lmm_richberg}, the Remmert reduction of $M$ has the same dimension of $M$, i.e. there exists a proper holomorphic function $f:M\to N$, where $N$ is a Stein space of the same dimension of $M$; moreover there exists a locally finite collection $\{E_j\}_{j\in \N}$ of compact complex subspaces of $N$, pairwise disjoint, such that, setting $E=\bigcup_j E_j$, the restriction of $f$ to $M\setminus E$ is a biholomorphism and $f(E)$ is a discrete set of points in $N$. It is easy to show that $\Sigma_M=E$.
\end{rem}

As a general principle, if we have an exhaustion which is strongly plurisubharmonic outside a compact set which is, in some sense, ``special'' for plurisubharmonic functions, then such a compact set has to belong to the minimal kernel; we see an easy instance of this idea in the following statement.

\begin{corol}\label{cor_modstein}Assume $M$ admits a continuous plurisubharmonic exhaustion function which is strictly plurisubhamonic outside $\bigcup_n C_n$, where $\{C_n\}$ is a sequence of compact complex curves. Then $\Sigma_M=\bigcup_n C_n$ and $M$ is a modification of a Stein space.\end{corol}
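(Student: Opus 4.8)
The plan is to reduce Corollary \ref{cor_modstein} to Lemma \ref{lmm_richberg} by showing that, under the stated hypothesis, no level-value image $\phi(\Sigma_M)$ can contain a half-line, and then to identify $\Sigma_M$ with $\bigcup_n C_n$. Let $\phi:M\to\R$ be the given continuous plurisubharmonic exhaustion, strictly plurisubharmonic outside $C:=\bigcup_n C_n$. Since strict plurisubharmonicity is stable under small $\Ci^2$ perturbations, $\phi$ is in particular strongly plurisubharmonic at every point of $M\setminus C$, so $Z_\phi\subseteq C$ and hence $\Sigma_M\subseteq C$. First I would observe that $C$ is a closed subset of $M$ which, being a locally finite union of compact complex curves, is a countable union of compact sets $C_n$; since each $C_n$ is a compact complex curve, it carries no non-constant plurisubharmonic function, so $\phi$ is constant on each connected component of each $C_n$. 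Thus $\phi(C)$ is a countable subset of $\R$ (at most one value per connected component, and there are countably many components), hence so is $\phi(\Sigma_M)\subseteq\phi(C)$. A countable set cannot contain any half-line $(a,+\infty)$, so the hypothesis of Lemma \ref{lmm_richberg} is satisfied and $M$ is a modification of a Stein space.

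It remains to prove the reverse inclusion $\bigcup_n C_n\subseteq\Sigma_M$, i.e. that every point of every $C_n$ lies in $Z_\psi$ for \emph{every} continuous plurisubharmonic exhaustion $\psi$ of $M$. Fix such a $\psi$ and fix $p\in C_n$; let $C_n'$ be the connected component of $C_n$ containing $p$, a compact connected complex curve. Suppose, for contradiction, that $\psi$ were strongly plurisubharmonic at $p$. The key point is that $\psi$ restricted to $C_n'$ is a continuous plurisubharmonic function on a compact complex curve, hence constant, say equal to $m$; moreover, by the maximum principle for plurisubharmonic functions along compact analytic sets, $C_n'$ is contained in the interior of the sublevel set $\{\psi\le m\}$ only in a degenerate sense — more precisely, since $\psi$ attains its maximum $m$ over $C_n'$ at every point of $C_n'$, the compact set $C_n'$ has the local maximum property for $\psi$ at $p$. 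But a function that is strongly (a fortiori, strictly after a small perturbation) plurisubharmonic in a neighborhood of $p$ cannot be constant on a positive-dimensional complex subvariety through $p$: restricting such a strictly plurisubharmonic function to a small holomorphic disc inside $C_n'$ through $p$ yields a strictly subharmonic function on the disc that is constant, which is absurd. Hence $\psi$ is not strongly plurisubharmonic at $p$, so $p\in Z_\psi$; as $\psi$ was arbitrary, $p\in\Sigma_M$. Combining the two inclusions gives $\Sigma_M=\bigcup_n C_n$, and the modification statement has already been established.

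The step I expect to require the most care is the last contradiction: one must be sure that ``strongly plurisubharmonic at $p$'' genuinely forbids $\psi$ from being constant along a complex curve through $p$. This is exactly where the definition of strong plurisubharmonicity (stability of plurisubharmonicity under $\Ci^2$ perturbations) is used: if $\psi$ stays plurisubharmonic after we subtract a small multiple of $|z|^2$ in a coordinate chart at $p$, then $\psi$ itself is \emph{strictly} plurisubharmonic near $p$, and a strictly plurisubharmonic function is strictly subharmonic on every germ of holomorphic disc, hence non-constant there — whereas $\psi$ is constant on the disc we chose inside $C_n'$. One should also take a small technical moment to ensure $C_n'$ has a non-constant holomorphic disc through $p$, i.e. that $p$ can be taken to be a regular point of $C_n'$ (if $p$ is a singular point, pass to a nearby regular point of the same component, where $\psi$ is still constant and still strongly plurisubharmonic by openness of $M\setminus Z_\psi$, and run the argument there; constancy of $\psi$ on all of $C_n'$ propagates the conclusion back to $p$). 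Everything else is a direct application of Lemma \ref{lmm_richberg} and elementary properties of plurisubharmonic functions on compact complex curves.
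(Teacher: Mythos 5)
Your proposal is correct and follows essentially the same route as the paper: the inclusion $\Sigma_M\subseteq\bigcup_n C_n$ is immediate from the definition, the reverse inclusion comes from the constancy of plurisubharmonic functions on compact curves (which forbids strong plurisubharmonicity there), and the countability of the image of $\Sigma_M$ lets you invoke Lemma \ref{lmm_richberg}. The only cosmetic difference is that the paper phrases the reverse inclusion via a single minimal function $\alpha$ rather than quantifying over all exhaustions $\psi$, which amounts to the same argument.
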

\begin{proof}By definition, $\Sigma_M\subseteq \bigcup_n C_n$. Moreover, given a minimal function $\alpha:M\to\R$, we see that $\alpha\vert_{C_n}\equiv c_n\in\R$, so $\Sigma_M=\bigcup_n C_n$; therefore, $\alpha(\Sigma_M)$ is a countable set in $\R$ and the previous lemma applies.\end{proof}

\section{Minimal kernel and local maximum sets}

The general principle stated before Corollary \ref{cor_modstein} applies also to compact sets with the local maximum property, as it is shown in the next lemma.
The rest of this section studies the interplay between local maximum sets and the analytic structure of the minimal kernel that was studied in \cite{mst}.

\begin{lemma}\label{lmm_complete}Let $K$ be a compact set in $M$ with the local maximum property and suppose there exists $\phi:M\to\R$ continuous plurisubharmonic exhaustion which is strongly plurisubharmonic on $X\setminus K$. Then $K$ is contained in the union of finitely many compact complex subspaces of $M$.\end{lemma}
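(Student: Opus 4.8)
The plan is to show the slightly stronger statement that $K=\Sigma_M$, together with the fact that $\Sigma_M$ is a finite union of compact complex subspaces. First I would note that, since $\phi$ is strongly plurisubharmonic on $M\setminus K$, we have $Z_\phi\subseteq K$, hence $\Sigma_M=\bigcap_\psi Z_\psi\subseteq Z_\phi\subseteq K$; thus $\Sigma_M$ is a closed subset of the compact set $K$, so it is compact and $\phi(\Sigma_M)$ is a compact subset of $\R$, which in particular contains no half-line $(a,+\infty)$. Lemma~\ref{lmm_richberg} and the Remark following it then give a proper holomorphic surjection $f\colon M\to N$ onto a Stein space $N$ of the same dimension, restricting to a biholomorphism of $M\setminus\Sigma_M$ onto $N\setminus f(\Sigma_M)$, with $f(\Sigma_M)$ discrete in $N$ and $\Sigma_M$ a locally finite union of pairwise disjoint compact complex subspaces. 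Since $\Sigma_M\subseteq K$ is compact, only finitely many of these occur, say $\Sigma_M=E_1\cup\dots\cup E_m$; each $E_j$ is compact, connected and mapped holomorphically into the Stein space $N$, hence $f(E_j)$ is a single point $q_j$, with $E_j=f^{-1}(q_j)$.

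The key step is the following estimate, where the local maximum property of $K$ enters: \emph{if $v$ is plurisubharmonic on a neighbourhood of $K$ and is $\Ci^2$ with positive definite Levi form at some point $x_0\in K$, then $v(x_0)<\max_K v$.} I would argue by contradiction: expanding in a holomorphic chart centred at $x_0$ one writes $v(z)-v(x_0)=2\,\Re g(z)+H(z)+o(|z|^2)$, with $g$ a holomorphic polynomial vanishing at $0$ and $H$ a positive definite Hermitian form; combining this with $v\le v(x_0)$ on $K$ yields $\Re g<0$ on $(K\cap\{|z|<r\})\setminus\{x_0\}$ and $\Re g(x_0)=0$ for $r$ small. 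Applying the local maximum property of $K$ at $x_0$ to the plurisubharmonic function $\Re g$ on a sufficiently small neighbourhood $W$ of $x_0$ with $\overline W\subseteq\{|z|<r\}$, one gets $0=\Re g(x_0)\le\max_{\overline{K\cap W}}\Re g=\max_{K\cap\partial W}\Re g$, while the right-hand side is $<0$ (or $-\infty$, if $K\cap\partial W=\emptyset$, with the usual convention), a contradiction. In other words, the maximum of such a $v$ over $K$ can only be attained at points where $v$ is not strictly plurisubharmonic.

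To conclude I would fix a smooth strictly plurisubharmonic exhaustion $\sigma_0$ of $N$ and, for each $g\in\mathcal O(N)$ and $\varepsilon>0$, apply this estimate to $v_\varepsilon:=(|g|^2+\varepsilon\sigma_0)\circ f$: this function is plurisubharmonic on $M$, and since $f$ is a local biholomorphism on $M\setminus\Sigma_M$ it has positive definite Levi form there, so by the estimate $\max_K v_\varepsilon$ is attained on $\Sigma_M$. On $E_j$ the function $v_\varepsilon$ equals the constant $|g(q_j)|^2+\varepsilon\sigma_0(q_j)$, hence $\max_{f(K)}(|g|^2+\varepsilon\sigma_0)=\max_{1\le j\le m}(|g(q_j)|^2+\varepsilon\sigma_0(q_j))$; letting $\varepsilon\to0$ (using that $\sigma_0\circ f$ is bounded on the compact $K$) gives $\max_{f(K)}|g|\le\max_{1\le j\le m}|g(q_j)|$. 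As this holds for every $g\in\mathcal O(N)$ and $N$ is Stein, $f(K)$ is contained in the holomorphically convex hull of the finite set $\{q_1,\dots,q_m\}$, which is that set itself. Therefore $K\subseteq f^{-1}(q_1)\cup\dots\cup f^{-1}(q_m)=E_1\cup\dots\cup E_m=\Sigma_M$, a finite union of compact complex subspaces of $M$.

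The reduction via Lemma~\ref{lmm_richberg} and the final holomorphic-hull argument are routine; I expect the real work to be in the second paragraph — making the local maximum property interact correctly with the Taylor expansion of $v$ at $x_0$ (in particular the bookkeeping around $\partial W$ and the empty-boundary case), and checking that ``strictly plurisubharmonic on $M\setminus\Sigma_M$'' is precisely the output of the Remmert reduction that one is entitled to use.
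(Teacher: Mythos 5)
Your proposal is correct, and its first half — observing that $\Sigma_M\subseteq Z_\phi\subseteq K$ is compact, so that Lemma~\ref{lmm_richberg} and the Remark following it apply and exhibit $\Sigma_M$ as a finite union of compact fibres $E_j=f^{-1}(q_j)$ of the Remmert reduction $f\colon M\to N$ — is exactly the paper's opening move. You diverge in how you prove the reverse inclusion $K\subseteq\Sigma_M$. The paper embeds $N$ into some $\C^n$, uses that the finite set $f(\Sigma_M)$ is complete pluripolar to pull back a plurisubharmonic $\rho$ on $M$ equal to $-\infty$ exactly on $\Sigma_M$ and smooth, strongly plurisubharmonic elsewhere; if $K\not\subseteq\Sigma_M$, the maximum of $\rho$ over $K$ is finite and attained at a point off $\Sigma_M$, and adding a small smooth bump there produces a \emph{strict} local maximum of a plurisubharmonic function on $K$, contradicting \cite{Sl2}*{Prop. 2.3(iv)}. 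You instead re-derive by hand the needed instance of that characterization — your Taylor-expansion argument is sound: positive definiteness of the Levi form at a point where $\max_K v$ is attained forces the pluriharmonic function $\Re g$ to have a strict local maximum on $K$ there, which is the forbidden configuration — and then run a holomorphic-hull argument with the family $(|g|^2+\varepsilon\sigma_0)\circ f$, $g\in\mathcal{O}(N)$, letting $\varepsilon\to 0$ to get $f(K)$ inside the $\mathcal{O}(N)$-hull of the finite set $\{q_1,\dots,q_m\}$, which is that set itself. Both routes rest on the same underlying principle about local maximum sets; the paper's single pluripolar test function reaches $K=\Sigma_M$ in one stroke, while yours is more self-contained (no appeal to the pluripolarity of finite sets) at the cost of the limiting argument and the Stein separation of points. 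Two small points to make explicit in your version: $f$ is a biholomorphism of $M\setminus\Sigma_M$ onto the \emph{regular} part $N\setminus f(\Sigma_M)$, so $\sigma_0$ need only be smooth and strictly plurisubharmonic there (e.g.\ $\sigma_0=|j|^2$ for a proper embedding $j$ of $N$), and one should take $\sigma_0\ge 0$ so that the passage $\varepsilon\to 0$ yields the clean inequality $\max_{f(K)}|g|\le\max_j|g(q_j)|$.
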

\begin{proof}Obviously, $\Sigma_M\subseteq K$. By Lemma \ref{lmm_richberg} and the Remark following it, we have a proper holomorphic surjection $f:M\to N$, where $N$ is a Stein space and $f(\Sigma_M)$ is a discrete set; however, as $\Sigma_M$ is compact in this case, $f(\Sigma_M)$ will be a finite number of points. Therefore, $\Sing N$ consists of finitely many points and we can embedd properly $N$ into some $\C^n$ by some holomorphic map $j:N\to \C^n$. The set $S=j(f(\Sigma_M))$ is obviously complete pluripolar in $\C^n$, so we have a plurisubharmonic function $u:\C^n\to\R\cup\{-\infty\}$ such that $u^{-1}(-\infty)=S$; by considering $\rho=u\circ j\circ f$, we show that $\Sigma_M$ is complete pluripolar in $M$. We can suppose that $\rho$ is smooth and strongly plurisubharmonic outside $\Sigma_M$.

Now, suppose that $K\not\subseteq \Sigma_M$ and let $m=\max_{K}\rho$; then $m>-\infty$. Take $z_0\in K$ such that $m=\rho(z_0)$ and consider a small ball $B$ around $z_0$ such that $B\cap\Sigma_M=\emptyset$ and a function $\chi\in\Ci^\infty_0(B)$ such that
$$\chi\geq 0,\quad \chi(z_0)=1,\quad \chi(x)<1\textrm{ if }x\in B\setminus\{z_0\}\;.$$
For sufficiently small $\epsilon>0$, the function $\rho_\epsilon=\rho+\epsilon\chi$ is plurisubharmonic in $B$ and its restriction to $B\cap K$ has a strict maximum in $z_0\in K$, which contradicts the characterization of local maximum sets given in \cite{Sl2}*{Prop. 2.3(iv)}. As this is impossible, we conclude that $K=\Sigma_M$.\end{proof}

In \cite{mst}, we describe the structure of the minimal kernel of a complex surface which admits a real-analytic plurisubharmonic exhaustion; we find two types of compact sets with some analytic structure: compact complex curves and compact Levi-flat hypersurfaces with dense complex leaves. Both are ``minimal'' among compact sets with local maximum property, in a sense which we make more precise in what follows.

The next proposition is obvious.

\begin{propos}\label{prp_ovvia}Let $C$ be a complex compact irreducible curve in a complex manifold $M$. Then $C$ does not contain any proper compact local maximum set.\end{propos}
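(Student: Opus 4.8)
The plan is to prove the statement in the equivalent form: a non-empty compact set $K\subseteq C$ with the local maximum property must coincide with $C$. First, such a $K$ has no isolated point, since if $p$ were isolated in $K$ then any plurisubharmonic function near $p$ would attain a (vacuously strict) local maximum over $K$ at $p$, against the characterisation of local maximum sets in \cite{Sl2}*{Prop.~2.3(iv)}; in particular $K$ is infinite. Assuming now $K\subsetneq C$, I will produce a plurisubharmonic function defined near a point of $K$ that has a strict local maximum over $K$ there, again contradicting \cite{Sl2}*{Prop.~2.3(iv)}, and thereby conclude.

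To build it, I would pass to the normalisation $\nu\colon\hat C\to C$. Since $C$ is compact and irreducible, $\hat C$ is a compact connected Riemann surface and $\nu$ is finite and surjective, so $\hat K:=\nu^{-1}(K)$ is an infinite compact set with $\hat K\subsetneq\hat C$ (if $\hat K=\hat C$ then $C=\nu(\hat C)\subseteq K$). Fix $\zeta_0\in\hat C\setminus\hat K$; then $U:=\hat C\setminus\{\zeta_0\}$ is a non-compact connected Riemann surface, hence Stein by Behnke--Stein, and $\hat K\subseteq U$. Let $\hat E:=\nu^{-1}(\Sing C)$, a finite set, and pick $\hat q_1\in\hat K\setminus\hat E$. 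Using that $U$ is Stein, choose a holomorphic $f\colon U\to\C$ with $f\equiv 0$ on $\hat E\cap\hat K$ and $f(\hat q_1)=1$. Since $\hat K$ is infinite while every fibre $\{f=c\}$ is discrete, $f$ is non-constant on $\hat K$, so $f(\hat K)$ is a compact subset of $\C$ with more than one point.

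Now let $w_0$ be an exposed point of the convex hull of $f(\hat K)$ with $w_0\neq 0$; such a point exists because that hull is a compact convex subset of $\C$ with more than one point, hence has at least two exposed points (each lying in $f(\hat K)$), and at most one of them equals $0$. Choose $\theta\in\R$ with $\Re(e^{i\theta}w)<\Re(e^{i\theta}w_0)$ for all $w\in f(\hat K)\setminus\{w_0\}$, and set $h:=\Re(e^{i\theta}f)$, harmonic on $U$. Pick $\hat q_0\in\hat K$ with $f(\hat q_0)=w_0$; since $f$ vanishes on $\hat E\cap\hat K$ and $w_0\neq 0$, we have $\hat q_0\notin\hat E$, so $p_0:=\nu(\hat q_0)$ is a smooth point of $C$ and $\nu$ is biholomorphic near $\hat q_0$. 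As $f$ is non-constant on the connected surface $U$, the set $\{f=w_0\}$ is discrete; hence there is a neighbourhood $V$ of $\hat q_0$ in $\hat C$ such that $h(\hat q)<h(\hat q_0)$ for every $\hat q\in\hat K\cap V$ with $\hat q\neq\hat q_0$, i.e. $h$ has a strict local maximum over $\hat K$ at $\hat q_0$.

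Finally, I would take holomorphic coordinates $(z,w)$ on a neighbourhood $W$ of $p_0$ in $M$ with $C\cap W=\{w=0\}$, so that through $\nu$ the function $h$ becomes a harmonic function $\tilde h$ of $z$; then $u(z,w):=\tilde h(z)$ is pluriharmonic, in particular plurisubharmonic, on $W$, and $u|_{C\cap W}=\tilde h$. Identifying $K$ near $p_0$ with $\hat K$ near $\hat q_0$ through $\nu$, one gets that $u|_K$ has a strict local maximum at $p_0\in K$ --- the desired contradiction --- so $K=C$. The crux, and the reason the proposition (though ``obvious'') is not entirely immediate, is precisely this manufacture of a genuine plurisubharmonic function \emph{on $M$} that peaks strictly along $K$: one has to leave $C$ for its normalisation, where the complement of $\hat K$ is a Stein Riemann surface supplying holomorphic functions; use convexity to pin the maximum to a single point; and arrange, via the vanishing of $f$ on the finitely many points over $\Sing C$, that this point lies over a smooth point of $C$, where the function extends trivially to $M$. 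The remaining ingredients --- Behnke--Stein, existence of exposed points, discreteness of the fibres of a non-constant holomorphic map --- are routine.
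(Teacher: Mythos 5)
The paper offers no argument here at all --- it simply declares the proposition obvious --- so there is nothing to compare against line by line. Judged on its own, your proof is essentially correct, and it supplies in full the standard reason the statement is ``obvious'': punctured at a point of $\hat C\setminus\hat K$, the normalisation becomes Stein by Behnke--Stein, and a Stein curve carries enough holomorphic functions for the exposed-point argument to kill any infinite compact local maximum set; the only genuine work is transporting the resulting harmonic peak function from the curve to an ambient plurisubharmonic function on $M$, which you handle correctly by forcing the peak to occur over a \emph{smooth} point of $C$. That precaution is not cosmetic: a function on the normalisation need not descend to $C$, let alone extend to $M$, near a singular point, so prescribing $f\equiv 0$ on $\hat E\cap\hat K$ and then exploiting $w_0\neq 0$ is exactly the right move.

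One small circularity should be repaired. When $\hat E\cap\hat K=\emptyset$, the interpolation conditions you impose are satisfied by the constant function $f\equiv 1$, and your justification that ``$f$ is non-constant on $\hat K$ because its fibres are discrete'' presupposes that $f$ is non-constant on $U$. Fix this by also prescribing $f(\hat q_2)=0$ at a second point $\hat q_2\in\hat K\setminus(\hat E\cup\{\hat q_1\})$, which exists because $\hat K$ is infinite by your (correct) no-isolated-points step; after that the argument runs verbatim. Two further cosmetic remarks: the coordinates $(z,w)$ should be $(z,w_1,\dots,w_{n-1})$ if the statement is to be proved for $M$ of arbitrary dimension, as in the paper; and when you shrink $W$ so that $\nu^{-1}(C\cap W)\subseteq V$ you are implicitly using the finiteness (properness) of $\nu$ together with $\nu^{-1}(p_0)=\{\hat q_0\}$ --- both immediate, but worth a word.
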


More generally, the presence of a Levi foliation prescribes the behaviour of local maximum sets.

\begin{lemma}Let $M$ be a complex manifold of (complex) dimension $2$ and $S\subset M$ a compact regular real-analytic Levi-flat hypersurface (i.e. of real dimension $3$). Let $K$ be a compact local maximum set in $S$, then $K$ is a union of leaves of the Levi foliation.\end{lemma}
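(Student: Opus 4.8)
The statement is local-to-global: I want to show a compact local maximum set $K$ inside a Levi-flat hypersurface $S$ is saturated for the Levi foliation $\mathcal F$. The key point is to argue \emph{leafwise}: each leaf $L$ of $\mathcal F$ is an immersed Riemann surface (a complex curve in $M$), and complex curves are ``thin'' for plurisubharmonic functions. So the strategy is: pick $p\in K$, let $L$ be the leaf through $p$, and show $L\subseteq K$. If I can do this for every $p\in K$, then $K$ is a union of leaves.

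\medskip

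\emph{Step 1: local coordinates.} Since $S$ is a regular real-analytic Levi-flat hypersurface in a surface, around any point $q\in S$ there are holomorphic coordinates $(z,w)$ in which $S=\{\,\Im w=0\,\}$ and the Levi foliation is given by the plaques $\{w=c\}$, $c\in\R$ (this is the standard local model for a Levi-flat hypersurface; real-analyticity guarantees the foliation is real-analytic, but for this argument even the $C^\infty$ flat model suffices). In these coordinates $S$ is foliated by the discs $\Delta\times\{c\}$, and the ``transverse'' direction is the real parameter $c=\Im w=0$, wait — I mean $c=\Re w$, with $S=\{\Im w=0\}$ and plaques $\{\,w=c\in\R\,\}$.

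\medskip

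\emph{Step 2: the local maximum property restricted to a plaque.} I will use the characterization of local maximum sets from \cite{Sl2}*{Prop.~2.3}: $K$ has the local maximum property iff for every plurisubharmonic $u$ defined near a point of $K$, $u|_K$ has no strict local maximum on $K$ relative to $K$ — equivalently, $K$ satisfies the local maximum principle with respect to plurisubharmonic functions. Now suppose $p\in K$ and let $P=\Delta\times\{c_0\}$ be the plaque of $\mathcal F$ through $p$ in a coordinate chart as above, so $P$ is a holomorphic disc. I claim $P\cap K$ is open in $P$. Indeed, suppose not: then there is a point $p_0\in P\cap K$ that is a boundary point (in $P$) of $P\cap K$, so there is a small holomorphic disc $j:\D\to P$, $j(0)=p_0$, meeting $P\cap K$ only partially. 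Consider the function $\rho$ from the proof of Lemma \ref{lmm_complete} — wait, that used $K\not\subseteq\Sigma_M$; here I do not have that. Instead I build a local obstruction directly: after a biholomorphic change in the $z$-variable I may assume $p_0=(0,c_0)$ and that $P\cap K$ near $p_0$ is contained in $\{\Re z\le 0\}$ say, because $P\cap K$ is a closed set not containing a neighborhood of $p_0$ in $P\cong\D$, so a suitable real-linear functional separates. Then the plurisubharmonic function $u(z,w)=\Re z$ (or a small perturbation $\Re z+\epsilon|w-c_0|^2$ to get a strict maximum) has a strict local maximum on $K$ at $p_0$ relative to $K$: on the plaque, $\Re z\le 0=u(p_0)$ with equality only at $p_0$ locally; in the transverse $w$-directions we add $\epsilon|\Im w|$ — but $\Im w\equiv 0$ on $S\supseteq K$... so I need to control the $\Re w$ direction, i.e.\ nearby plaques. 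Here is where I add $-\epsilon(\Re w-c_0)^2$: since $\Re w - c_0$ is pluriharmonic, $u_\epsilon(z,w)=\Re z - \epsilon(\Re w-c_0)^2$ is plurisubharmonic, and on $K$ (where $\Im w=0$) it is $\le 0$ with a strict local max at $p_0$. That contradicts the local maximum property of $K$. Hence $P\cap K$ is open in $P$.

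\medskip

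\emph{Step 3: globalize along the leaf.} $P\cap K$ is also closed in $P$ (as $K$ is closed and $P$ is a submanifold of its chart), and $P$ is connected, so $P\cap K=\emptyset$ or $P\cap K=P$. Thus: if $p\in K$, the entire plaque through $p$ lies in $K$. Now the leaf $L$ through $p$ is a connected union of plaques chained together, and ``being contained in $K$'' propagates from plaque to overlapping plaque (two plaques in a chain share an open subset, hence if one is in $K$ so is the next by the dichotomy). Therefore $L\subseteq K$. Since this holds for every $p\in K$, we get $K=\bigcup_{p\in K}L_p$, a union of leaves of the Levi foliation, which is the assertion. \textbf{The main obstacle} is Step 2: arranging a genuinely plurisubharmonic local barrier that detects the failure of $P\cap K$ to be open even though $K\subseteq S$ is ``flat'' and classical barriers like $\Im w$ vanish on $S$; the resolution is to use the pluriharmonic coordinate $\Re w$ transversally, building $u_\epsilon=\Re z-\epsilon(\Re w - c_0)^2$, and to invoke the separation of the plaque-trace $P\cap K$ by a real functional together with \cite{Sl2}*{Prop.~2.3}. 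Minor care is needed that the strict maximum is local \emph{relative to $K$} and survives the $\epsilon$-perturbation, and that the real-analyticity of $S$ (hence of $\mathcal F$) is only used to have well-defined plaques and a genuine foliation; the plurisubharmonic estimates are purely local.
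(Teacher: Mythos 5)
Your overall architecture (a local dichotomy on plaques, then propagation along the leaf by chaining plaques) is exactly the paper's, and Step 3 is fine. The problem is Step 2, which is the heart of the proof, and it contains two genuine errors. The first is the separation claim: from the mere fact that $p_0$ is a non-interior point of the relatively closed set $P\cap K$ in the plaque $P$, you cannot conclude that $P\cap K$ lies locally in a half-plane $\{\Re z\le 0\}$ through $p_0$. For instance $P\cap K$ could be $\{0\}\cup\{\epsilon\le|z|\le 2\epsilon\}$ near $p_0=0$; then $p_0$ is a boundary point but every nonconstant real-linear functional vanishing at $p_0$ is positive somewhere on $P\cap K$ arbitrarily close by, so your barrier has no maximum at $p_0$ at all, let alone a strict one. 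This is precisely why the paper does not build a barrier at a prescribed boundary point: it observes that $K_0=K\cap P$ is a proper relatively closed subset of a disc and therefore admits a \emph{peak point} $z_0$ for some polynomial $f$ (with $f(z_0)=1$, $|f|<1$ on $K_0\setminus\{z_0\}$) at some a priori unknown point of $K_0$ -- and a peak point somewhere is all that is needed to contradict the local maximum property and obtain the dichotomy ``$P\cap K=\emptyset$ or $P$''. Without the peak-function idea (or some substitute for it), your openness argument does not go through.

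The second error is that your transverse term is not plurisubharmonic: if $h=\Re w-c_0$ is pluriharmonic then $i\partial\bar\partial(-h^2)=-2i\,\partial h\wedge\bar\partial h\le 0$, so $-\epsilon(\Re w-c_0)^2$ is plurisuperharmonic and $u_\epsilon=\Re z-\epsilon(\Re w-c_0)^2$ fails to be psh; the inference ``$\Re w-c_0$ is pluriharmonic, hence $u_\epsilon$ is plurisubharmonic'' is a non sequitur. The fix is the one the paper uses: take $-\epsilon\,\Re[(w-c_0)^2]$, which is genuinely pluriharmonic and coincides with $-\epsilon(\Re w-c_0)^2$ on $\{\Im w=0\}\supseteq K$, so all your estimates on $K$ survive. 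This second point is repairable by a one-line change; the first point is a real gap that requires importing the peak-point argument.
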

\begin{proof}We start by proving a local version.
\begin{center}\parbox{0.9\textwidth}{\it Let $L$ be a leaf of the Levi foliation. Then for every $p\in L$ there exists a neighborhood $B_p$ such that  every connected component of $L\cap B_p$ is either contained in $K$ or disjoint from $K$.}
\end{center}
Since $S$ is regular and real analytic, there are holomorphic coordinates $(z,w)$ in some neighborhood $V$ of $p$ such that $p$ has coordinates $(0,0)$; we define $B_p=\{(z,w)\ :\ |z|^2+|w|^2< 1\}$, then $S\cap B_p\subseteq \{\Im w=0\}$. The foliation of $S$ in $\{\Im w=0\}\cap\{|z|^2+|w|^2< 1\}$ are the discs $\{\Re w=\mathrm{const}\}\cap\{\Im w=0\}\cap\{|z|^2+|w|^2< 1\}$.

If the local version is false in $B_p$, then $K$ intersects the disc
$$\{\Re w=t_0\}\cap\{\Im w=0\}\cap\{|z|^2+|w|^2<1\}$$
but does not contain it. The set $K_0=K\cap B_p\cap\{\Re w=t_0, \Im w=0\}$ is  a proper relatively closed subset of a complex disc, so it must have a peak point in the disc. That is, there are a complex number $z_0$ and a polynomial $f(z)$ such that  $f(z_0)=1$ and $|f(z)|<1$ if $(z,t_0)\in K_0$ and $z\neq z_0$.Let now
$$\psi(z,w)=|f(z)|-\Re[(w-t_0)^2]\;.$$
Then $\psi$ is a plurisubharmonic function on $B_p$ such that $\psi<1$ on $K_0\setminus\{(z_0,t_0)\}$ and $\psi(z_0,t_0)=1$; this contradicts the local maximum property of $K$, thus proving the local version.

\medskip

Let us now turn our attention to the original statement. Let $L$ be a leaf which intersects $K$; we will denote by $L_{\mathrm{st}}$ the leaf $L$ with the topology induced by the inclusion in $S$ and by $L_{\mathrm{cov}}$ the leaf $L$ with the topology whose open sets are the connected components of the intersection of $L$ with any open set of $S$.

The identity map $id_L:L_{\mathrm{cov}}\to L_{\mathrm{st}}$ is continuous, so $K\cap L$ is a relatively closed subset of both; the local version implies that $K\cap L$ is relatively open in $L_{\mathrm{cov}}$; as $L$ is connected with respect to both topologies, $K\cap L=L$, i.e. $L\subseteq K$.
\end{proof}

As a consequence, we finally obtain the analogue of Proposition \ref{prp_ovvia} for compact Levi-flat hypersurfaces with dense leaves.

\begin{corol}\label{cor_denso}If $M$ is a Grauert type surface and $\chi$ is the pluriharmonic function given by \cite{mst}*{Main Theorem}, then $S=\chi^{-1}(d)$, where $d$ is a regular value, does not contain any proper compact local maximum set.\end{corol}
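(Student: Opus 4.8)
The plan is to deduce this directly from the preceding lemma on Levi-flat hypersurfaces, once we feed in the density of the complex leaves that characterises Grauert type surfaces. First I would check that $S=\chi^{-1}(d)$ satisfies the hypotheses of that lemma: by \cite{mst}*{Main Theorem} the function $\chi$ is pluriharmonic, hence real-analytic, so $S$ is a real-analytic hypersurface; it is regular since $d$ is a regular value; and the regular level sets of $\chi$ are compact Levi-flat hypersurfaces, whose Levi foliation is the one recalled in the discussion preceding Proposition \ref{prp_ovvia}, with complex leaves that are dense in $S$.

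Next, let $K\subseteq S$ be a non-empty compact local maximum set. The lemma above gives that $K$ is a union of leaves of the Levi foliation, so I would fix one leaf $L\subseteq K$. Density of the leaves yields $\overline{L}=S$; on the other hand $K$ is compact, hence closed in $M$ and a fortiori in $S$, so $S=\overline{L}\subseteq K$, i.e. $K=S$. Therefore $S$ contains no proper compact local maximum set, where ``proper'' is understood as ``non-empty and properly contained''.

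The argument is short and I do not expect a genuine obstacle; the only points that need care are bookkeeping ones, namely verifying that the regular level sets of $\chi$ really are compact, regular, real-analytic and Levi-flat with dense leaves (all of which is part of the structure theorem in \cite{mst}), and pinning down the convention on the word ``proper'' so that the conclusion $K=S$ indeed contradicts it.
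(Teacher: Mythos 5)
Your proof is correct and follows essentially the same route as the paper: apply the preceding lemma to conclude that any compact local maximum set $K\subseteq S$ contains a leaf, then use the density of leaves (\cite{mst}*{Corollary 3.8}) together with the closedness of $K$ to force $K=S$. The only difference is that you spell out the verification of the lemma's hypotheses and the closure argument, which the paper leaves implicit.
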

\begin{proof} By the previous lemma, a local maximum set in $S$ would contain a leaf of the Levi foliation, but every leaf is dense in $S$ (see \cite{mst}*{Corollary 3.8}), so every local maximum set contained in $S$ coincides with $S$.\end{proof}

%

\section{Subdomains of a weakly complete space}

We now prove Theorem \ref{teo_main}.

\medskip

Let $X$ be a complex surface which admits a real-analytic plurisubharmonic exhaustion function and $D\subseteq X$ a domain which admits a $\Ci^0$ plurisubharmonic exhaustion function.

According to \cite{mst}*{Main Theorem}, there are three possibilities:
\begin{enumerate}
\item $X$ is a modification of a Stein space
\item $X$ is proper over an open Riemann surface
\item $X$ is of Grauert type.\end{enumerate}

The first case is the easiest.

\begin{propos} If $X$ is a modification of a Stein space, then $D$ is a modification of a Stein space as well.\end{propos}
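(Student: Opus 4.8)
The plan is to verify the hypothesis of Lemma~\ref{lmm_richberg} for $D$ --- namely that for some continuous plurisubharmonic exhaustion of $D$ the image of $\Sigma_D$ contains no half-line --- since once that is done $D$ is a modification of a Stein space and we are finished. To this end I would first record the structure of $X$. Being a modification of a Stein space, $X$ has a Remmert reduction $\pi:X\to Y$ with $Y$ a Stein space of dimension $2$ and $\pi$ a biholomorphism outside $\pi(\Sigma_X)=:\{q_j\}$, a discrete subset of $Y$ containing $\Sing Y$; each fibre $\pi^{-1}(q_j)=:C_j$ is a compact complex curve and $\{C_j\}$ is a locally finite family. Fixing a real-analytic strictly plurisubharmonic exhaustion $u$ of $Y$, the function $v:=u\circ\pi$ is a real-analytic plurisubharmonic exhaustion of $X$ that is strongly plurisubharmonic on $X\setminus\bigcup_jC_j$ and satisfies $v\equiv u(q_j)$ on $C_j$; moreover $\{u(q_j)\}_j$ is discrete in $\R$, because $u$ is an exhaustion and the $q_j$ are discrete.

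Next I would transfer this to $D$. Writing $\phi:D\to\R$ for the given continuous plurisubharmonic exhaustion, set $\psi:=\phi+v|_D$. Since $v|_D$ is bounded below, $\psi$ is again a continuous plurisubharmonic exhaustion of $D$, and it is strongly plurisubharmonic wherever $v$ is, i.e. on $D\setminus\bigcup_jC_j$. Consequently $\Sigma_D\subseteq\bigcup_j(C_j\cap D)$. For each $j$ with $C_j\subseteq D$, the maximum principle forces every plurisubharmonic function on $D$ to be constant on $C_j$; a small $\Ci^2$-perturbation of such a function at a point of $C_j$ then destroys plurisubharmonicity, so $C_j\subseteq\Sigma_D$, and a minimal function $\alpha$ of $D$ satisfies $\alpha|_{C_j}\equiv a_j$ for some $a_j\in\R$.

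The heart of the argument --- and the step I expect to be the main obstacle --- is to show that $\Sigma_D$ does not meet any $C_j$ with $C_j\not\subseteq D$; granting this, $\Sigma_D=\bigcup\{C_j:C_j\subseteq D\}$. Suppose, for contradiction, that $p\in\Sigma_D\cap C_j$ with $C_j\not\subseteq D$. Using the analytic structure of the minimal kernel from \cite{mst}, together with a peak-point argument of the type used in Proposition~\ref{prp_ovvia} and in the lemma on Levi-flat hypersurfaces above, one sees that $\Sigma_D\cap C_j$ is relatively open, hence a union of connected components of $C_j\cap D$; fix such a component $C^0_j\subseteq\Sigma_D$. As a proper open subset of the compact curve $C_j$, the surface $C^0_j$ (or its normalisation) is a non-compact Riemann surface, hence Stein, and carries a smooth strictly subharmonic exhaustion $h$. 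On the other hand, since $C_j$ is exceptional for $\pi$, the Levi form of $v$ at a smooth point of $C_j$ is positive semidefinite with kernel exactly the tangent line to $C_j$. Extending $h$ to a neighbourhood of $C^0_j$ in $D$ and adding a small multiple of it to $v$ yields a function that is strictly plurisubharmonic on a neighbourhood of $C^0_j$; gluing this to $\psi$ by a $\max$-construction produces a continuous plurisubharmonic exhaustion of $D$ that is strongly plurisubharmonic on $C^0_j$ as well --- contradicting $C^0_j\subseteq\Sigma_D$. The delicate point is the globalisation of this gluing: along the ends of $C^0_j$ --- which approach $\partial D$, so that both $\psi$ and $h$ blow up there --- one must control the relative growth of the two functions, which requires a careful choice of the strictly subharmonic exhaustion of $C^0_j$; one must also handle the singular points of $C_j$, either by passing to the normalisation or by a density argument.

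Once this is established, $\Sigma_D=\bigcup\{C_j:C_j\subseteq D\}$ is a locally finite union of compact complex curves, and the minimal function $\alpha$ takes on $\Sigma_D$ only the values $\{a_j:C_j\subseteq D\}$. Since $\alpha^{-1}([a,b])$ is compact, it meets only finitely many $C_j$, so only finitely many of the $a_j$ lie in any bounded interval; thus $\alpha(\Sigma_D)$ is discrete in $\R$ and contains no half-line $(a,+\infty)$. Lemma~\ref{lmm_richberg} now applies to $D$ and shows that $D$ is a modification of a Stein space.
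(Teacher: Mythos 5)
Your reduction to Lemma \ref{lmm_richberg}, and the first two steps --- that $\Sigma_D\subseteq\bigcup_j(C_j\cap D)$ via the exhaustion $\phi+v|_D$, and that $C_j\subseteq\Sigma_D$ whenever $C_j\subseteq D$ --- are correct and consistent with the paper, which packages the final step as Corollary \ref{cor_modstein}. The problem is precisely the step you flag as the heart of the argument: showing that $\Sigma_D$ meets no $C_j$ with $C_j\not\subseteq D$. As written, this step is not a proof. First, the claim that $\Sigma_D\cap C_j$ is relatively open in $C_j\cap D$ is only asserted, via a vague appeal to ``a peak-point argument of the type used in Proposition \ref{prp_ovvia}''; that proposition concerns \emph{compact} local maximum sets inside a compact irreducible curve, and at this stage you have not exhibited any compact local maximum set to feed into it. Second, and more seriously, the gluing does not close up: the ends of $C^0_j$ accumulate on $\partial D$, where both $\psi$ and (any extension of) the strictly subharmonic exhaustion $h$ of $C^0_j$ tend to $+\infty$, and there is no a priori comparison between their growth rates; hence the max-construction cannot be arranged to coincide with $\psi$ outside a relatively compact set while dominating it near $C^0_j$. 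You acknowledge this (``one must control the relative growth''), but that is exactly the point requiring an argument, and it is not clear it can be supplied in general for a merely continuous exhaustion $\phi$.

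The paper avoids the construction entirely by exploiting the local maximum property of the minimal kernel. For a minimal function of $D$, the intersections of its level sets with $\Sigma_D$ are compact sets with the local maximum property (the basic structure result of \cite{ST}, invoked again in Section 4 through \cite{Sl3}), and they sit inside the union of the compact irreducible curves making up $\Sigma_X$. Proposition \ref{prp_ovvia} then forces each such set either to miss a given irreducible component or to contain it entirely; consequently $\Sigma_D$ is a union of irreducible components of the $C_j$, each of which is therefore a compact curve contained in $D$, and Corollary \ref{cor_modstein} concludes. This yields your ``heart'' in a few lines; you should replace the gluing argument by it.
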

\begin{proof} We have that $\Sigma_D\subseteq\Sigma_X$, so $\Sigma_D$ is contained in a union of compact complex curves. By Proposition \ref{prp_ovvia}, $\Sigma_D$ is the union of some irreducible components of these curves, which are then contained in $D$. By Corollary \ref{cor_modstein}, $D$ is a modification of a Stein space.\end{proof}

In the other two cases, we have a set $T\subset\R^n$ and a real-analytic proper map $F:X\to T$ with pluri(sub)harmonic components; for the Grauert type surface it is obvious, with $n=1$ and $F=\chi$ (up to passing to a double cover of $X$), where $\chi$ is the pluriharmonic function given by \cite{mst}*{Main Theorem} (on $X$ or on a double cover of $X$). In the remaining case, let $\pi:X\to R$ be the proper surjective holomorphic map, where $R$ is an open Riemann surface; we embedd properly $R$ in $\C^3$ by $g:R\to\C^3$, then $F=g\circ \pi$ is a proper map from $X$ to $\R^n$, $n=6$, with pluriharmonic components.

We set $V=F(D)$ and we note that in both cases the set of singular values is a discrete set $\{t_j\}_{j\in\N}$. We recall a result from \cite{M}.

\begin{propos}If there is $t_0\in V$ such that $F^{-1}(t_0)\subset D$, then $D=F^{-1}(V)$, hence $F:D\to V$ is proper and $D$ is of the same type as $X$.\end{propos}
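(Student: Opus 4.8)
The statement asserts that if a single fiber $F^{-1}(t_0)$ is entirely contained in $D$, then $D$ saturates under $F$, i.e.\ $D = F^{-1}(V)$ with $V = F(D)$. Since $F^{-1}(V) \supseteq D$ trivially, the content is the reverse inclusion: I must show that whenever $F(x) = t$ for some $x \in D$, the \emph{whole} fiber $F^{-1}(t)$ lies in $D$. The natural strategy is a connectedness argument on the parameter space $V$: let $W$ be the set of $t \in V$ such that $F^{-1}(t) \subseteq D$; show $W$ is nonempty (it contains $t_0$), open, and closed in $V$, and that $V$ is connected, so $W = V$.

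First I would observe that $V = F(D)$ is connected because $D$ is a domain and $F$ is continuous. Openness of $W$ is where the continuous psh exhaustion of $D$ enters: if $F^{-1}(t) \subseteq D$, then (using properness of $F$, so the fiber is compact) a whole neighborhood $F^{-1}(U)$ of that fiber is relatively compact in $D$ — here I would use that the exhaustion sublevel sets of $D$ are relatively compact, combined with properness of $F:X\to T$, to trap $F^{-1}(U)$ inside a sublevel set of the exhaustion. This gives $U \subseteq W$. For closedness, suppose $t_k \to t$ with $t_k \in W$ and $t \in V$; I need $F^{-1}(t) \subseteq D$. Pick $x \in D$ with $F(x) = t$. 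The fiber $F^{-1}(t)$ is a compact complex subvariety of $X$ (a fiber of a proper map with pluriharmonic components — in the Riemann surface case it is $\pi^{-1}$ of a point, in the Grauert case a level of $\chi$, both compact complex curves up to the Levi-flat subtlety), and the key point is that this fiber is a compact \emph{local maximum set} (or contained in one): on it the pluriharmonic components of $F$ are constant, hence any psh function pushed from below along $F$ is constant, which forces the local maximum property. Since $x$ lies in $D$ and in this irreducible compact local maximum set, Proposition \ref{prp_ovvia} (or its Levi-flat analogue, Corollary \ref{cor_denso}) together with the fact that $\Sigma_D$ and the exhaustion of $D$ control which such sets can stick partly out of $D$ — more directly: a relatively closed proper subset of an irreducible compact curve cannot be cut out by the closure operation of $D$ because the curve has no proper compact local maximum subset — shows that the portion of $F^{-1}(t)$ in $\overline{D}$ cannot be a proper closed subset, so $F^{-1}(t) \subseteq D$.

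I expect the main obstacle to be the closedness step, specifically making rigorous the claim that $F^{-1}(t) \cap D$ being nonempty forces $F^{-1}(t) \subseteq D$. The clean way is: the continuous psh exhaustion $\lambda$ of $D$ restricted near the fiber, pushed through $F$, plus the constancy of $\lambda$ on $F^{-1}(t) \cap D$ (because $F^{-1}(t)$ carries no nonconstant psh function — it is a compact complex curve, or a Levi-flat hypersurface with dense leaves on which psh functions are constant), shows that if part of the fiber were in $\partial D$ then $\lambda$ would have to blow up there while being finite on the rest of the compact connected fiber, a contradiction with continuity; alternatively one invokes Lemma \ref{lmm_complete} after checking $F^{-1}(t) \cap D$ is a compact local maximum set in $\overline D$, but the cleanest route is the direct exhaustion argument. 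Once $W = V$ is established, $D = F^{-1}(V)$ is immediate, properness of $F|_D : D \to V$ follows from properness of $F:X \to T$, and $D$ inherits the structure type of $X$ (proper over an open Riemann surface, or of Grauert type) since the fibration structure is simply the restriction.
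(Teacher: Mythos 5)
This proposition is not proved in the paper at all: it is recalled verbatim from \cite{M}, so there is no internal proof to compare against. Judged on its own terms, your open--closed strategy on the connected set $V$ is the right one, and your openness step is fine: properness of $F$ traps all nearby fibers inside any prescribed neighborhood of a compact fiber contained in the open set $D$.

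The genuine gap is in the closedness step. You derive the contradiction from the asserted constancy of the exhaustion $\lambda$ on $F^{-1}(t)\cap D$, ``because $F^{-1}(t)$ carries no nonconstant psh function.'' But the maximum principle on a compact curve (or the dense-leaves argument on a Levi-flat level) applies only to functions plurisubharmonic on a neighborhood of the \emph{entire} compact fiber; on a proper open piece $F^{-1}(t)\cap D$ there are plenty of nonconstant subharmonic functions (e.g.\ $-\log(1-|z|^2)$ on a disc inside $\P^1$), and $\lambda$ may perfectly well blow up along $\partial D\cap F^{-1}(t)$ with no contradiction. Likewise $F^{-1}(t)\cap\overline{D}$ has no reason to be a local maximum set, so Proposition \ref{prp_ovvia} and Corollary \ref{cor_denso} cannot be applied to it directly. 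The repair is to apply constancy to the \emph{nearby} fibers $F^{-1}(t_k)\subseteq D$, $t_k\in W$, $t_k\to t$: these are entire compact fibers (connected --- a point that must be extracted from the structure theorem, e.g.\ via Stein factorization of $\pi$), so $\lambda\equiv\mu(t_k)$ on each. Now pick $x\in D$ with $F(x)=t$ and a neighborhood $U\Subset D$ of $x$; since $F$ is an open map, $F^{-1}(t_k)$ meets $U$ for large $k$, whence $\mu(t_k)\le \max_{\overline{U}}\lambda=:c<\infty$. Thus all these fibers lie in the compact set $\{\lambda\le c\}\Subset D$, and openness of $F$ shows every point of $F^{-1}(t)$ is a limit of points of $\bigcup_k F^{-1}(t_k)$, so $F^{-1}(t)\subseteq\{\lambda\le c\}\subseteq D$. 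With that substitution your argument closes; the final assertions (properness of $F|_D$ and inheritance of the type of $X$) then follow as you say.
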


To conclude, we just need to prove the following.

\begin{lemma} If for all $t\in V$, $F^{-1}(t)\not\subset D$, then $D$ is a modification of a Stein space.\end{lemma}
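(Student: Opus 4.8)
The plan is to pin down the minimal kernel $\Sigma_D$: to show that it is a locally finite union of compact complex curves contained in $D$ when $X$ is proper over a Riemann surface, and that it is empty when $X$ is of Grauert type; in either case the conclusion then follows from Corollary~\ref{cor_modstein}, respectively Lemma~\ref{lmm_richberg}. So I would fix a continuous minimal exhaustion $\psi$ of $D$, so that $\Sigma_D=Z_\psi$ is closed in $D$, and recall that we are given a proper real-analytic map $F\colon X\to T$ with pluri(sub)harmonic components and a discrete set of singular values, that $V=F(D)$, and that by hypothesis $F^{-1}(t)\cap D\subsetneq F^{-1}(t)$ for every $t\in V$; since $F$ is proper, every fibre $F^{-1}(t)$ is compact.

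The heart of the matter is the claim that, for each $t\in V$, the set $K_t:=\Sigma_D\cap F^{-1}(t)$ is a \emph{compact} set with the \emph{local maximum property} (and $K_t\subsetneq F^{-1}(t)$ by hypothesis). Granting this, the two cases are quickly dispatched. If $X$ is proper over a Riemann surface, write $F=g\circ\pi$ with $\pi\colon X\to R$ proper and surjective; after passing to the Stein factorisation we may assume the fibres of $\pi$ are connected, so that the reducible fibres form a discrete subset of $R$, and $F^{-1}(g(s))=\pi^{-1}(s)$ is a compact complex curve. By Proposition~\ref{prp_ovvia}, applied to each irreducible component of $\pi^{-1}(s)$, the compact local maximum set $K_{g(s)}$ is a union of full irreducible components of $\pi^{-1}(s)$; since $K_{g(s)}\subsetneq\pi^{-1}(s)$, the fibre must be reducible and these components lie entirely in $D$. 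As reducible fibres are discrete, $\Sigma_D$ is a locally finite union of compact complex curves contained in $D$, and Corollary~\ref{cor_modstein} gives that $D$ is a modification of a Stein space. If $X$ is of Grauert type, then (up to a double cover) $F=\chi$ is the pluriharmonic function of \cite{mst}, whose regular levels $\chi^{-1}(d)$ are compact Levi-flat hypersurfaces with dense complex leaves, which by Corollary~\ref{cor_denso} contain no proper compact local maximum set; hence $K_d=\emptyset$ for every regular value $d$, so $\chi(\Sigma_D)$ is contained in the discrete set of singular values. A parallel analysis of the singular levels — using that the singular set of the pluriharmonic $\chi$ is discrete, so each singular level is Levi-flat off finitely many points and the leaf-saturation argument still applies — then forces $\Sigma_D=\emptyset$; as a Grauert type surface (hence also $D$) carries no compact curve, this simply says $D$ is Stein, and Lemma~\ref{lmm_richberg} applies.

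It remains to prove the claim, and I expect this to be the main obstacle. For the local maximum property one first recalls that the minimal kernel $\Sigma_D$ is itself a local maximum set (see \cite{mst}); when $X$ is proper over a Riemann surface this already suffices, since $F^{-1}(t)$ is a complex subvariety and the trace of a local maximum set on a complex subvariety is again a local maximum set, whereas in the Grauert case $F^{-1}(t)$ is only real-analytic, and one must transplant the local analysis in the proof of the lemma on Levi-flat hypersurfaces above (its ``local version'') directly to the set $\Sigma_D$, concluding that $\Sigma_D\cap\chi^{-1}(d)$ is saturated by leaves. For compactness of $K_t$: since $F^{-1}(t)$ is compact and $\Sigma_D$ is closed in $D$, the only danger is that $\Sigma_D$ clusters at $\partial D$ along $F^{-1}(t)$; to exclude this I would argue that at such a cluster point $p\in\partial D\cap F^{-1}(t)$ one can, writing $F_i=\Re h_i$ with $h_i$ holomorphic near $p$ and combining the plurisubharmonic functions $|z|^2$ (a coordinate along the fibre) and $\sum(\Re(h_i-t_i))^2$ with the minimal $\psi$, produce a continuous plurisubharmonic exhaustion of $D$ that is strongly plurisubharmonic at points of $\Sigma_D$ near $p$, contradicting $\Sigma_D=Z_\psi$. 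The delicate points are thus precisely the boundary behaviour of $\Sigma_D$ and, in the Grauert case, the passage from the local structure of $\Sigma_D$ to the (real-analytic, possibly singular) level sets of $\chi$, for which I expect the leaf-density results of \cite{mst} to be essential.
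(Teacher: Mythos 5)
Your overall strategy --- slice $\Sigma_D$ by the fibres of $F$, show the slices are compact local maximum sets, rule out regular fibres via Proposition \ref{prp_ovvia} and Corollary \ref{cor_denso}, and deal with the singular fibres separately --- is indeed the paper's strategy, but the two points you yourself flag as delicate are genuine gaps, and your proposed fixes do not work. First, compactness: $K_t=\Sigma_D\cap F^{-1}(t)$ need not be compact, and your remedy at a cluster point $p\in\partial D\cap F^{-1}(t)$ fails, because a bounded local plurisubharmonic function such as $|z|^2+\sum(\Re(h_i-t_i))^2$ cannot be patched into a \emph{global} continuous plurisubharmonic exhaustion of $D$ that dominates $\psi$ at points of $\Sigma_D$ close to $p$: any max- or gluing-type construction would require the bounded local function to beat $\psi$ near $p$, where $\psi\to+\infty$. (If such a patching were possible, it would show that $\Sigma_D$ never accumulates on $\partial D$, which is false, e.g.\ for Grauert type domains, where $\Sigma_D=D$.) The paper sidesteps the issue entirely by cutting with a level set of the exhaustion: it considers $K=\{F=t_0\}\cap\{\phi=\phi(p)\}\cap\Sigma_D$, which is automatically compact because $\{\phi=\phi(p)\}$ is, and which still gives the contradiction $F^{-1}(t_0)\subset D$ if $K$ were a whole regular fibre. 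Relatedly, your justification of the local maximum property via ``the trace of a local maximum set on a complex subvariety is again a local maximum set'' is false in general (a Levi-flat hypersurface is a local maximum set, yet its trace on a transverse complex disc is a real arc, which is not); the paper instead invokes \cite{Sl3}*{Lemma 4.8}, which applies precisely because the slice is cut out by level sets of plurisubharmonic functions \emph{including the exhaustion} $\phi$.

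Second, the singular fibres. In the Grauert case you assert that a ``parallel analysis'' of the singular levels of $\chi$ forces $\Sigma_D=\emptyset$, and that a Grauert type surface carries no compact curve; neither claim is substantiated (the singular levels are not regular Levi-flat hypersurfaces, so the leaf-saturation lemma does not apply to them, and they may contain compact curves). The paper handles both cases uniformly: it shows $K_j=\Sigma_D\cap F^{-1}(t_j)$ is a closed union of local maximum sets, hence itself a local maximum set, passes to $D_j=D\cap F^{-1}(B)$ with the exhaustion $\phi+v\circ F$, which is strongly plurisubharmonic off $K_j$, and applies Lemma \ref{lmm_complete} to place $K_j$ inside finitely many compact curves; Corollary \ref{cor_modstein} then concludes. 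Your Stein-factorisation argument is a reasonable alternative for the fibred case only, but as written the proposal does not close either of the two gaps above.
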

\begin{proof}Let $\phi$ be a minimal continuous function for $\Sigma_D$; consider $t_0\in V$ such that $\Sigma_D\cap F^{-1}(t_0)\neq\emptyset$. For $p$ in this intersection, let
$$K=\{F=t_0\}\cap\{\phi=\phi(p)\}\cap\Sigma_D\;.$$
We claim that $K$ is a compact set with the local maximum property; indeed, as $\phi$ and the components of $F$ are plurisubharmonic functions, we can apply \cite{Sl3}*{Lemma 4.8}, which proves that $K$ is a local maximum set. Compactness is obvious.

Either by Proposition \ref{prp_ovvia} or Corollary \ref{cor_denso}, if $t_0$ is a regular value, then $K=F^{-1}(t_0)$, which is impossible; then, $\Sigma_D\cap\{F^{-1}(t)\}\neq \emptyset$ if and only if $t$ is a singular value. Therefore,
$$\Sigma_D\subseteq \bigcup_{j\in\N}F^{-1}(t_j)\;.$$
For $j\in\N$, let
$$K_j=\Sigma_D\cap\{F=t_j\}$$
and note that
$$K_j=\bigcup_{c\in \phi(K_j)}\Sigma_D\cap\{F=t_j\}\cap\{\phi=c\}\;.$$
The sets on the right were shown to be local maximum sets, so their union $K_j$, being closed, also enjoys the local maximum property (see \cite{Sl2}*{Proposition 3.5(b)}).

Fix $j\in N$, let $B$ be a small ball around $t_j$ in $V\subset\R^n$ such that, for $m\neq j$, $t_m\not\in B$ and define $D_j=D\cap F^{-1}(B)$. Consider $v:B\to\R$ a convex exhaustion, then $\rho=\phi+v\circ F$ is a continuous plurisubharmonic exhaustion for $D_j$, which is strongly plurisubharmonic outside $K_j$ (as $\phi$ is minimal for $\Sigma_D$ and $D_j\cap\Sigma_D=K_j$).

By Lemma \ref{lmm_complete}, $K_j$ is contained in the union of finitely many compact curves; therefore $\Sigma_D\subseteq \bigcup K_j$ is contained in the union of at most countably many compact curves. By Corollary \ref{cor_modstein}, $D$ is a modification of a Stein space.
\end{proof}

This concludes the proof of our main result.

\medskip

There are two easy consequences, which are nonetheless worth mentioning.

The first consequence is that, in the situation we study, the minimal kernel does not depend on the regularity of the functions we consider.

\begin{corol}\label{cor_reg_ker}Let $X$ be a complex surface which admits a real-analytic plurisubharmonic exhaustion function and consider a domain $D\subseteq X$ which admits a continuous plurisubharmonic exhaustion function. Then the minimal kernels of $D$ with respect to continuous function and with respect to real-analytic functions are the same set.\end{corol}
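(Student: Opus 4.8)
The plan is to leverage Theorem \ref{teo_main} to reduce to the three structural cases and, in each, compare the continuous minimal kernel $\Sigma_D$ with the real-analytic one, which I will denote $\Sigma_D^{\omega}$. First I would recall the trivial inclusion: every real-analytic plurisubharmonic exhaustion is in particular continuous, so the intersection defining $\Sigma_D$ runs over a larger family, giving $\Sigma_D\subseteq \Sigma_D^{\omega}$. Thus the whole content is the reverse inclusion $\Sigma_D^{\omega}\subseteq\Sigma_D$, equivalently: whenever there is a continuous plurisubharmonic exhaustion that is strongly plurisubharmonic off some set, one can find a real-analytic one with the same (or smaller) degeneracy set.

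In the first case of Theorem \ref{teo_main}, $D$ is a modification of a Stein space, so $\Sigma_D$ is a union of finitely many (or countably many) compact complex curves, and the proof of that case already produces, via Corollary \ref{cor_modstein} and the Remmert reduction, an exhaustion pulled back from a strictly plurisubharmonic exhaustion of a Stein space; on a Stein space one has real-analytic strictly plurisubharmonic exhaustions, and pulling back by the (holomorphic) Remmert reduction keeps them real-analytic, strongly plurisubharmonic exactly off the exceptional curves. Hence $\Sigma_D^{\omega}\subseteq\Sigma_D$ there. In cases (2) and (3), $F\colon D\to V$ is proper with pluri(sub)harmonic components (by the same construction used in Section 4, $F$ being holomorphic in case (2) and $F=\chi$ pluriharmonic in case (3)), so $\phi=\phi_0+v\circ F$ with $\phi_0$ a real-analytic exhaustion of the base $V$ (which exists: $V$ is an open subset of an open Riemann surface or of $\R$) and $v$ convex real-analytic is a real-analytic plurisubharmonic exhaustion of $D$, strongly plurisubharmonic precisely where $F$ is a submersion with the right rank, i.e. off $\Sigma_X\cap D$; combined with $\Sigma_D\subseteq\Sigma_X\cap D$ and the description of $\Sigma_D$ from the proof of Theorem \ref{teo_main}, this forces $\Sigma_D^{\omega}\subseteq\Sigma_D$.

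The main obstacle I anticipate is case (3): the minimal kernel there is a union of compact Levi-flat hypersurfaces with dense complex leaves, which carry no real-analytic strictly plurisubharmonic function transversally, so one must be careful that the real-analytic candidate exhaustion really degenerates exactly on $\Sigma_D$ and not on a strictly larger set — this is handled by the fact that $\chi$ (or its lift) is real-analytic pluriharmonic and its regular level sets are exactly the Levi-flat leaves constituting the kernel, so $\phi_0+v\circ\chi$ degenerates nowhere else. A secondary subtlety is making sure the base exhaustion $\phi_0$ can be taken real-analytic and proper on $V=F(D)$, an open piece of a (one-dimensional) Stein manifold, which is standard. Once all three cases give $\Sigma_D^{\omega}\subseteq\Sigma_D$, together with the reverse inclusion the equality follows, completing the proof.

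\begin{proof}
Every real-analytic plurisubharmonic exhaustion of $D$ is continuous, so the family over which the real-analytic minimal kernel is the intersection is contained in the family defining $\Sigma_D$; hence $\Sigma_D\subseteq\Sigma_D^{\omega}$, where $\Sigma_D^{\omega}$ denotes the minimal kernel with respect to real-analytic functions. It remains to prove $\Sigma_D^{\omega}\subseteq\Sigma_D$, for which it suffices to exhibit a real-analytic plurisubharmonic exhaustion of $D$ that is strongly plurisubharmonic on $D\setminus\Sigma_D$.

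By Theorem \ref{teo_main}, either $D$ is a modification of a Stein space of dimension $2$, or $F\colon D\to V$ is proper onto an open subset $V$ of an open Riemann surface (case (2)), or of $\R$ (case (3), after passing to a double cover and using the pluriharmonic $\chi$), with pluri(sub)harmonic real-analytic components, as in Section 4.

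Suppose first that $D$ is a modification of a Stein space. By the proof of Corollary \ref{cor_modstein} and Lemma \ref{lmm_richberg}, $\Sigma_D$ is a union of at most countably many compact complex curves and there is a proper holomorphic surjection $\pi\colon D\to N$ onto a Stein space $N$, biholomorphic off $\pi^{-1}(S)$ where $S=\pi(\Sigma_D)$ is discrete. Choose a real-analytic strictly plurisubharmonic exhaustion $\sigma\colon N\to\R$ (which exists on any Stein space). Then $\sigma\circ\pi$ is a real-analytic plurisubharmonic exhaustion of $D$, strongly plurisubharmonic exactly on $D\setminus\pi^{-1}(S)=D\setminus\Sigma_D$. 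Hence $\Sigma_D^{\omega}\subseteq\Sigma_D$.

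Now suppose we are in case (2) or (3). Then $V$ is an open subset of a one-dimensional Stein manifold, so it admits a real-analytic strictly plurisubharmonic exhaustion $\phi_0\colon V\to\R$; in case (3) we may simply take a convex real-analytic exhaustion of the open subset $V\subseteq\R$. As in the proof of Theorem \ref{teo_main}, $\phi_0\circ F$ is a continuous plurisubharmonic exhaustion of $D$; since $F$ is real-analytic with pluri(sub)harmonic components, $\phi_0\circ F$ is in fact real-analytic and plurisubharmonic. Adding a real-analytic strictly plurisubharmonic function on $D$ multiplied by a small positive constant — equivalently, applying the construction $\rho=\phi+v\circ F$ of Section 4 with real-analytic $\phi$ and $v$ — we obtain a real-analytic plurisubharmonic exhaustion of $D$ that is strongly plurisubharmonic wherever $F$ is a submersion of the expected rank, that is, on $D\setminus(\Sigma_X\cap D)$. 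From the proof of Theorem \ref{teo_main} we have $\Sigma_D\subseteq\Sigma_X\cap D$, and in fact the argument there shows $\Sigma_D$ is exactly the locus where this exhaustion degenerates: in case (3), $\chi$ is pluriharmonic and its regular level sets are precisely the Levi-flat hypersurfaces constituting $\Sigma_D$, so the degeneracy locus is contained in $\Sigma_D$. Therefore $\Sigma_D^{\omega}\subseteq\Sigma_D$.

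In all three cases $\Sigma_D^{\omega}\subseteq\Sigma_D$, and combined with the inclusion $\Sigma_D\subseteq\Sigma_D^{\omega}$ we conclude $\Sigma_D=\Sigma_D^{\omega}$.
\end{proof}
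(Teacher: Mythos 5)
Your overall strategy --- reduce via Theorem \ref{teo_main} to the three structural cases and compare the two kernels case by case --- is the right one, and your treatment of case (1) (pulling back a real-analytic strictly plurisubharmonic exhaustion from the Remmert reduction, which degenerates exactly on the exceptional curves $=\Sigma_D$) is essentially correct, as is the trivial inclusion $\Sigma_D\subseteq\Sigma_D^{\omega}$ coming from comparing the two intersections.

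The treatment of cases (2) and (3), however, contains a genuine error. You claim that $\phi_0\circ F$, corrected by ``adding a real-analytic strictly plurisubharmonic function on $D$ multiplied by a small positive constant,'' is strongly plurisubharmonic ``wherever $F$ is a submersion of the expected rank, that is, on $D\setminus(\Sigma_X\cap D)$.'' Neither half of this can work: $F$ is a proper map with positive-dimensional fibers (compact curves in case (2), and in case (3) the levels of $\chi$ are $3$-real-dimensional Levi-flat hypersurfaces foliated by complex curves), so any composition $u\circ F$ is constant along a germ of complex curve through \emph{every} point of $D$ and is therefore strongly plurisubharmonic nowhere; and no strictly plurisubharmonic function exists on $D$ at all in these cases, since in case (2) every point of $D=F^{-1}(V)$ lies on a compact complex curve and in case (3) every point lies in a compact level of $\chi$ whose dense complex leaves, via the local maximum property, force every plurisubharmonic function to be constant on the level. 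The correct (and much shorter) argument in these two cases is precisely this observation: the obstruction to strong plurisubharmonicity is geometric and applies verbatim to functions of any regularity, so $\Sigma_D=\Sigma_D^{\omega}=D$ and equality is immediate --- there is nothing to construct, because $D\setminus\Sigma_D=\emptyset$. Your conclusion happens to be true in these cases, but the mechanism you invoke (a real-analytic exhaustion strongly plurisubharmonic off $\Sigma_D$ built from $F$) does not exist and obscures why the statement holds. Replace the second half of your proof with the remark that in cases (2) and (3) both kernels coincide with $D$, keeping your construction only for case (1).
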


The second consequence can be formulated as a regularization result: if we know that a continuous exhaustion exists, we can produce a real-analytic exhaustion.

\begin{corol}\label{cor_reg_exh}Let $X$ be a complex surface which admits a real-analytic plurisubharmonic exhaustion function and consider a domain $D\subseteq X$; if $D$ admits a continuous plurisubharmonic exhaustion function, then it admits a real-analytic plurisubharmonic exhaustion function, which is strongly plurisubharmonic outside the minimal kernel.\end{corol}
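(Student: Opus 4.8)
The plan is to invoke the trichotomy of Theorem \ref{teo_main} and, in each case, build the real-analytic exhaustion by pulling back a standard real-analytic exhaustion along the structural map that the classification provides.

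Suppose first we are in case (1), so $D$ is a modification of a Stein space. By Lemma \ref{lmm_richberg} and the Remark following it there is a proper holomorphic surjection $f\colon D\to N$, with $N$ a Stein space of dimension $2$, together with a locally finite family of pairwise disjoint compact complex subspaces $\{E_j\}$ of $D$ such that $E=\bigcup_j E_j=\Sigma_D$, the restriction $f\colon D\setminus E\to N\setminus f(E)$ is a biholomorphism, and $f(E)$ is discrete. I would embed $N$ properly as a closed analytic subset $j\colon N\hookrightarrow\C^m$ and set $\rho=|j\circ f|^2$. Then $\rho$ is real-analytic (a sum of squared moduli of holomorphic functions), plurisubharmonic, bounded below, and an exhaustion of $D$ because $f$ and $j$ are proper and $|\cdot|^2$ is an exhaustion of $\C^m$. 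On $D\setminus E$ the map $f$ is biholomorphic onto an open subset of the regular locus of $N$, where $|j|^2$ is strictly plurisubharmonic, so $\rho$ is strongly plurisubharmonic there; on each $E_j$ the function $\rho$ is constant (since $f(E_j)$ is a single point), so its Levi form degenerates along $E_j$ and $\rho$ is not strongly plurisubharmonic at any point of $E=\Sigma_D$. Hence $\rho$ is strongly plurisubharmonic precisely outside $\Sigma_D$.

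Suppose now we are in case (2) or (3). Here I would first note that $\Sigma_D=D$: in case (2) every fibre of the proper surjection $\pi\colon D\to S_D$ onto the open Riemann surface $S_D$ is a compact curve (the fibre-dimension theorem forces $\dim\pi^{-1}(t)=1$ for all $t$, as $D$ is a connected surface and $\pi$ is nonconstant and proper), so any continuous plurisubharmonic function on $D$ is constant along the connected components of the fibres, hence has degenerate Levi form at every smooth fibre point and, strong plurisubharmonicity being an open condition, is nowhere strongly plurisubharmonic; in case (3) every point of $D$ lies on a compact Levi-flat leaf with dense complex leaves, and the maximum-principle argument of the lemma above forces any continuous plurisubharmonic function to be constant along each leaf, again degenerate everywhere. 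Thus the clause about strong plurisubharmonicity is automatic, and it suffices to exhibit a real-analytic plurisubharmonic exhaustion. In case (2) the open Riemann surface $S_D$ is Stein, hence carries a real-analytic strictly subharmonic proper exhaustion $u$, and $u\circ\pi$ works. In case (3) I would take the proper pluriharmonic $\chi$ of \cite{mst}*{Main Theorem} (defined on $D$, or on a connected double cover $\widehat D\to D$), pick a real-analytic convex exhaustion $v$ of the open set $\chi(D)\subseteq\R$ (convex and proper on each component interval), form the real-analytic plurisubharmonic exhaustion $v\circ\chi$, and, in the double-cover case, average it over the deck group---which preserves plurisubharmonicity, real-analyticity and the exhaustion property---to descend to $D$.

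Since Theorem \ref{teo_main} and its supporting lemmas carry essentially all the weight, I do not expect a serious obstacle; the points that genuinely need care are checking that each pulled-back function is a true exhaustion of $D$---this is why in case (2) one must use the exhaustion intrinsic to the \emph{open} Riemann surface $S_D$ rather than the restriction of an ambient one---and descending across the double cover in the Grauert case, both of which are routine.
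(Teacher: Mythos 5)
Your overall strategy is the right one and is exactly what the paper leaves implicit: the corollary is meant to follow from Theorem \ref{teo_main} by producing, in each of the three cases, a real-analytic exhaustion pulled back along the structural map (Remmert reduction, the proper map to the Riemann surface, or $\chi$ composed with a convex function, averaged over the double cover). Your treatment of cases (2) and (3), including the observation that $\Sigma_D=D$ there so that the strong plurisubharmonicity clause is vacuous, is consistent with Corollary \ref{cor_denso} and the surrounding discussion, and the descent across the double cover by averaging is fine.

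The one step that does not go through as written is the global proper embedding $j\colon N\hookrightarrow\C^m$ in case (1). In Lemma \ref{lmm_complete} the paper can embed $N$ because there $\Sigma_M$ is compact, so $f(\Sigma_M)$ and hence $\Sing N$ are finite; in case (1) of the main theorem, however, $E=\bigcup_j E_j$ may consist of infinitely many exceptional sets, the discrete set $f(E)$ may be infinite, and the resulting normal surface singularities of $N$ can have unbounded local embedding dimension (e.g.\ contractions of longer and longer configurations of curves), in which case $N$ embeds in no finite-dimensional $\C^m$. The fix is standard and preserves your argument: since $N$ is Stein, choose holomorphic functions $g_k\in\mathcal O(N)$ and weights $\epsilon_k>0$ so that $u=\sum_k\epsilon_k|g_k|^2$ converges locally uniformly and is a strictly plurisubharmonic exhaustion of $N$; then $\rho=u\circ f=\sum_k\epsilon_k|g_k\circ f|^2$ is the squared norm of a holomorphic map from $D$ into $\ell^2$, hence real-analytic on all of $D$, plurisubharmonic, proper (as $f$ is proper and $u$ is an exhaustion), strongly plurisubharmonic on $D\setminus E=D\setminus\Sigma_D$ where $f$ is a biholomorphism onto an open subset of $\mathrm{Reg}\,N$, and constant on each $E_j$. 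With this replacement your proof is complete.
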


Finally, by Corollary \ref{cor_reg_ker}, a minimal continuous exhaustion and a minimal real-analytic exhaustion are strongly plurisubharmonic on the same open set.

It is also worth noting that the results presented imply that we can generalize Corollary 3.1 in \cite{M} to the case of domains with a continuous exhaustion, thus implying an analogue of Main Theorem in \cite{mie} and Theorem 1.1 in \cite{LY} for such domains in a Hopf surface.

\begin{bibdiv}
\begin{biblist}
\bib{grau1}{article}{
   author={Grauert, Hans},
   title={On Levi's problem and the imbedding of real-analytic manifolds},
   journal={Ann. of Math. (2)},
   volume={68},
   date={1958},
   pages={460--472},
   issn={0003-486X},
   review={\MR{0098847}},
   doi={10.2307/1970257},
}
\bib{GraRem}{article}{
   author={Grauert, Hans},
   author={Remmert, Reinhold},
   title={Plurisubharmonische Funktionen in komplexen R\"{a}umen},
   language={German},
   journal={Math. Z.},
   volume={65},
   date={1956},
   pages={175--194},
   issn={0025-5874},
   review={\MR{0081960}},
   doi={10.1007/BF01473877},
}

\bib{KK}{book}{
   author={Kaup, Ludger},
   author={Kaup, Burchard},
   title={Holomorphic functions of several variables},
   series={De Gruyter Studies in Mathematics},
   volume={3},
   note={An introduction to the fundamental theory;
   With the assistance of Gottfried Barthel;
   Translated from the German by Michael Bridgland},
   publisher={Walter de Gruyter \& Co., Berlin},
   date={1983},
   pages={xv+349},
   isbn={3-11-004150-2},
   review={\MR{716497}},
   doi={10.1515/9783110838350},
}
\bib{LY}{article}{
author = {Levenberg, Norman},
author={Yamaguchi, Hiroshi},
doi = {10.2969/jmsj/06710231},
journal = {J. Math. Soc. Japan},
number = {1},
pages = {231--273},
publisher = {Mathematical Society of Japan},
title = {Pseudoconvex domains in the Hopf surface},
volume = {67},
year = {2015}
}
\bib{mie}{article}{
Title = {Pseudoconvex non-{S}tein domains in primary {H}opf surfaces},
Journal = {Izv. Ross. Akad. Nauk Ser. Mat.},
Author = {Miebach, C.},
Number = {5},
Volume = {78},
Year = {2014},
Pages = {191--200},
doi={10.1070/IM2014v078n05ABEH002717},
}
\bib{M}{article}{
	author={Mongodi, S.},
	title={Weakly complete domains in Grauert type surfaces},
	journal={Annali di Matematica Pura ed Applicata (1923 -)},
	year={2018},
	note={to appear},
	}
\bib{Fuffa}{article}{
    author={Mongodi, S.},
    title={Union of holomorphically convex spaces},
    note={Unpublished notes},
    year={2019},
    eprint={arXiv:1903.08104},
    }
\bib{crass}{article}{
author={Mongodi, Samuele},
   author={Slodkowski, Zbigniew},
   author={Tomassini, Giuseppe},
title = {On weakly complete surfaces},
journal = {Comptes Rendus Mathematique},
volume = {353},
number = {11},
pages = {969 -- 972},
year = {2015},
note = {},
issn = {1631-073X},
doi = {10.1016/j.crma.2015.08.009},

}

  \bib{mst}{article}{
     author = {Mongodi, Samuele},
     author = {Slodkowski, Zbigniew},
		author = {Tomassini, Giuseppe},
      title = {Weakly complete compex surfaces},
    journal = {Indiana Univ. Math. J.},
     volume = {67},
       year = {2018},
     number = {2},
      pages = {899 \ndash 935},
}
	
	\bib{mst2}{article}{
author = {Mongodi, Samuele},
     author = {Slodkowski, Zbigniew},
		author = {Tomassini, Giuseppe},
title = {Some properties of Grauert type surfaces},
journal = {International Journal of Mathematics},
volume = {28},
number = {08},
pages = {1750063},
year = {2017},
doi = {10.1142/S0129167X1750063X},}

\bib{Nar}{article}{
   author={Narasimhan, Raghavan},
   title={The Levi problem for complex spaces. II},
   journal={Math. Ann.},
   volume={146},
   date={1962},
   pages={195--216},
   issn={0025-5831},
   review={\MR{0182747}},
   doi={10.1007/BF01470950},
}

\bib{rich}{article}{
   author={Richberg, Rolf},
   title={Stetige streng pseudokonvexe Funktionen},
   language={German},
   journal={Math. Ann.},
   volume={175},
   date={1968},
   pages={257--286},
   issn={0025-5831},
   review={\MR{0222334}},
   doi={10.1007/BF02063212},
}
%
 \bib{Sl2}{article}{
 author={Slodkowski, Zbigniew},
   title={Local maximum property and $q$-plurisubharmonic functions in
   uniform algebras},
   journal={J. Math. Anal. Appl.},
   volume={115},
   date={1986},
   number={1},
   pages={105--130},
   issn={0022-247X},
   doi={10.1016/0022-247X(86)90027-2},
}
\bib{Sl3}{article}{
	author={Slodkowski, Zbigniew},
	title={Pseudoconcave decompositions in complex manifolds},
	journal={Hopkins-Maryland Complex Geometry Seminar proceedings volume of the AMS Contemporary Mathematics series},
	note={to appear},
	}
\bib{ST}{article}{
    author={Slodkowski, Zibgniew},
   author={Tomassini, Giuseppe},
   title={Minimal kernels of weakly complete spaces},
   journal={J. Funct. Anal.},
   volume={210},
   date={2004},
   number={1},
   pages={125--147},
   issn={0022-1236},
   doi={10.1016/S0022-1236(03)00182-4},
}
  \end{biblist}
\end{bibdiv}

\end{document}